\newtheorem{thm}{Theorem}
\newtheorem{lem}[thm]{Lemma}
\newtheorem{prop}[thm]{Proposition}
\newtheorem{cor}[thm]{Corollary}
\newtheorem{defn}[thm]{Definition}
\newtheoremstyle{reptheorem}{}{}{\itshape}{}{\scshape}{}{ }{\thmname{#1}\mathrm{#3}}
\theoremstyle{reptheorem}
\newtheoremstyle{remark}{}{}{}{}{\scshape}{.}{ }{}
\theoremstyle{remark}
\newtheorem{rem}[thm]{Remark}
\newtheorem{rems}[thm]{Remarks}
\newtheorem{exm}[thm]{Example}
\title{Seifert circles, crossing number and the braid index of generalized knots and links}
\author{GUSTAVO CARDOSO\\
Departamento de Matem\'atica,\\
Universidade Federal de S\~ao Carlos,\\
CEP:~13565-905 - São Carlos - SP - Brazil.\\
e-mail:~\url{gustavo.cardoso2017@hotmail.com}\vspace*{4mm}\\
OSCAR~OCAMPO~\\
Departamento de Matem\'atica - Instituto de Matem\'atica e Estat\'istica,\\
Universidade Federal da Bahia,\\
CEP:~40170-110 - Salvador - BA - Brazil.\\
e-mail:~\url{oscaro@ufba.br}
}
\date{\today}
\begin{document}

\maketitle

\begin{abstract}

For classical links Ohyama proved an inequality involving the minimal crossing number and the braid index, then motivated from this Takeda showed an analogous inequality for virtual links. 
In this paper, we are interested in studying properties of links independent of the type of crossings, and for this reason, we introduce generalized crossings for diagrams and generalized Reidemeister-type moves. 
The aim of this work is to prove the same type of inequality mentioned above but now involving the total crossing number and the braid index of generalized knots and links. In particular, we show that the result holds for virtual singular links.

\end{abstract}

\let\thefootnote\relax\footnotetext{2020 \emph{Mathematics Subject Classification}. Primary: 57K12; Secondary: 57K10, 57M15, 20F36.
		
\emph{Key Words and Phrases}. Generalized knots; Seifert circles; Braid index.}

\pagestyle{fancy}
\fancyhead{} % clear all header fields
\fancyhead[RO]{\textbf{Seifert circles, crossing number and the braid index of generalized knots and links}}
%\fancyhead[LO]{\textbf{ \thepage}}
%\fancyhead[RE]{\textbf{ \thepage}}
\fancyhead[CE]{\textbf{G.~Cardoso and O.~Ocampo}}

\renewcommand{\headrulewidth}{0pt}

%\fancyfoot{} % clear all footer fields

%%%%%%%%%%%%%%%%%%%%%%%%%%%%%%%%%%  Section  %%%%%%%%%%%%%%%%%%%%%%%%%%%%%%%%%%%%%%%%%%%

\section{Introduction}

In this paper, we are interested in generalizations of classical knots and links from the point of view of its diagrams. We would like to study geometric objects similar to knots, but such that their respective diagrams have non-classical crossings, for instance virtual, flat and singular crossings will be considered.  
From the end of the twentieth century, some generalizations of the notion of a classical link were introduced, in particular, we highlight the virtual knot theory (and related objects like welded or unrestricted links) introduced by Kauffman \cite{K} as well as the notion of generalized knots recently defined by Bartholomew and Fenn 
\cite{BF1}. However, we note that the approach in this work for generalized links is independent of the one considered in \cite{BF1}. 

%\cite{BF2}

The notion of Seifert graphs was first introduced in \cite{book2} for diagrams of classical links and was proved in detail in \cite{book1} that, in this case, the Seifert graph of a link diagram is bipartite and planar. 
It is mentioned in \cite{article1} that we have the same result for the Seifert graph of a virtual link diagram (and also for flat virtual, welded, unrestricted virtual and singular links diagrams). One of the goals of this work is to give a proof of the fact that we can get this result for all cases previously mentioned and for any diagram of a generalization of classical links. In other words, we shall prove that the Seifert graph of any generalized knot or link diagram is bipartite and planar. As far as we know, this result is new in the literature for diagrams of doodles, virtual doodles and virtual singular links.

There are some interesting inequalities in classical knot theory that involve some well known numbers like the braid index, the crossing number, among others.
For instance, the Morton-Franks-Williams inequality for a classical link gives a lower bound for the braid index in terms of the HOMFLY polynomial, see \cite{FW} and \cite{M}. 
For classical links, Ohyama proved (\cite[Theorem~3.8]{article1}) the following inequality
\begin{equation}\label{eq:ohyama}
c(L)\geq 2(b(L)-1),
\end{equation}
where $L$ denotes a nonsplit link and $c(L)$  and $b(L)$ are the minimal crossing number and the braid index  of $L$, respectively. 
Takeda extended this result for a virtual link $L$ under certain assumptions \cite[Theorem~3.1]{article3}, proving that
%\begin{equation}\label{eq:takeda}
$
tc(L)\geq 2(vb(L)-1),
$
%\end{equation}
where $tc(L)$ is the total crossing number of $L$ and $vb(L)$ is the virtual braid index of $L$. 

The main aim of this paper is to prove the same type of inequality given in \eqref{eq:ohyama}, but for generalized knots and links involving the total crossing number and the braid index in this new context.
In order to do that, we will define generalized crossings for diagrams and generalized Reidemeister-type moves. 
The proof was inspired by the technique used by Ohyama in the classical case \cite{article1} and by Takeda in the virtual case \cite{article3}, in this case using the notion of generalized Seifert circles and the respective generalized Seifert graph.

This article is organized as follows. 
We start recalling in Section~\ref{sec2} the notion of a diagram for classical links and some of its generalizations, for instance links with virtual, flat or singular crossings. For each collection of diagrams, depending on its type of crossings, we mention the Reidemeister-type movements allowed in each collection. 
Then, we consider links with arbitrary crossings, and so we define for them the generalized braid index and also consider generalized Reidemeister-type moves.
Section~\ref{sec3} has two subsections. In the first one, we establish some basic facts of graph theory that are necessary to prove the main results of this section. 
In the second subsection, we construct the Seifert graph for generalized links and then we prove in Theorem~\ref{sggnl} that for a diagram of a generalized knot or link its Seifert graph is planar and bipartite. We finish the section stating in Theorem~\ref{thm1} that if $D$ is a diagram of a generalized link $L$ with no nugatory crossings, then 
$$
tc(D) \geq 2(S(D) - \textrm{ind}(D) - 1),
$$ 
where $tc(D)$ is the total crossing number of $D$, $S(D)$ is the number of Seifert circles of $D$ and $\textrm{ind}(D)$ is the index of the graph related to a diagram of an oriented generalized link (see Definition~\ref{def:ind}).
In Section~\ref{sec4} we prove the main result of the paper, see Theorem~\ref{thm2}, that is as follows.  
Let $D$ be a diagram of a generalized link $L$ under certain assumptions on its crossings, then we have the inequality 
$$ 
tc(L) \geq 2(gb(L) - 1), 
$$ 
where $tc(L)$ is the total crossing number of $L$ and $gb(L)$ is the generalized braid index of $L$. 
 We end the section showing that, in particular, the last inequality holds for virtual singular links, see Proposition~\ref{cor:vsl}.

%%%%%%%%%%%%%%%%%%%%%%%%%%%%%%%%%%  Section  %%%%%%%%%%%%%%%%%%%%%%%%%%%%%%%%%%%%%%%%%%%

\subsection*{Acknowledgments}

%\textcolor{red}{The authors} 
The majority of this work was completed within the scope of the undergraduate scientific initiation of the first author, year 2021/2022, under the guidance of the second author, as part of the research project ``Virtual braid groups'', registered in ``Sistema de Gerenciamento de Bolsas de Inicia\c c\~ao -- SISBIC/UFBA'' with number 21039. The first author was partially supported by PIBIC/UFBA - CNPq and by CAPES.

%during the EDITAL PROPCI/UFBA 01/2021 - PIBIC with the research project Grupos de Tranças Virtuais and the first authour is supported by CAPES.

\section{Generalizations of knots and links}\label{sec2}

We define a knot as the image of the circle $\mathbb{S}^{1}$ in $\mathbb{R}^{3}$ under an embedding. A link is a collection of knots which do not intersect, in particular a knot is a link with one component. 
We refer the reader to the books \cite{book1}, \cite{book2}, and \cite{book3} for more details on knot theory.
In this work, we call these geometric objects classical links. Knots and links are presented usually by regular projections on the plane $\mathbb{R}^{2}$ which are called their \emph{diagrams}. In the classical case, we consider (real) crossings as illustrated in Figure~\ref{real crossings}. 

\begin{figure}[!htb]
\centering
\includegraphics[scale=0.15]{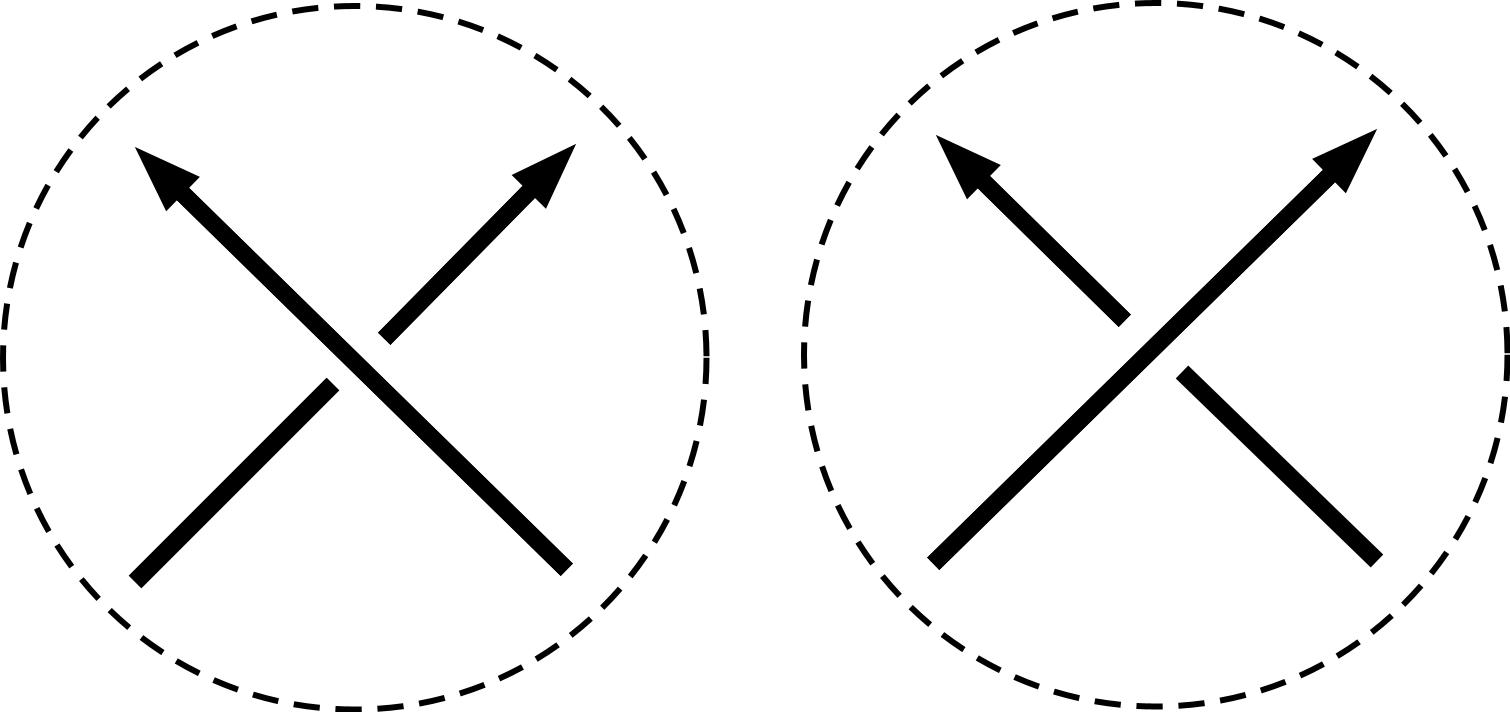}
\caption{Negative and positive real crossings, respectively.}
\label{real crossings}
\end{figure}

The Reidemeister's theorem \cite{R} asserts that two knots (or links) are equivalents if and only if their respective diagrams can be transformed into each other by a finite number of Reidemeister moves, represented in Figure~\ref{RM}.

\begin{figure}[!htb]
\centering
\includegraphics[scale=1.0]{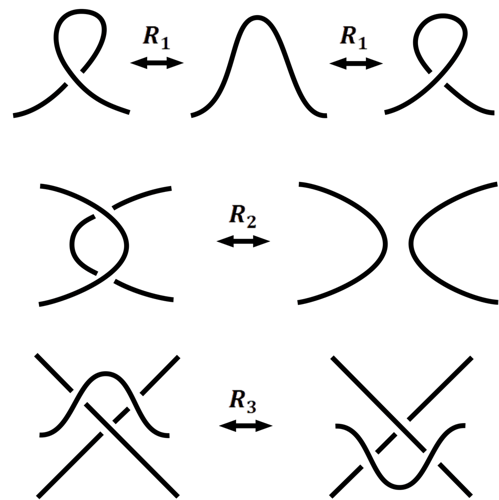}
\caption{Reidemeister moves.}
\label{RM}
\end{figure}

Every classical knot or link diagram can be regarded as an immersion of circles in the plane with extra structure at the double points. 
This extra structure is usually indicated by the over and under crossing conventions that give instructions for constructing an embedding of the link in three-dimensional space from the diagram. But we can consider this extra structure as other types of crossings. In this work we shall consider other kind of crossings, for instance the flat, virtual and singular crossings, illustrated in Figure~\ref{FVS}.

\begin{figure}[!htb]
\centering
\includegraphics[scale=0.15]{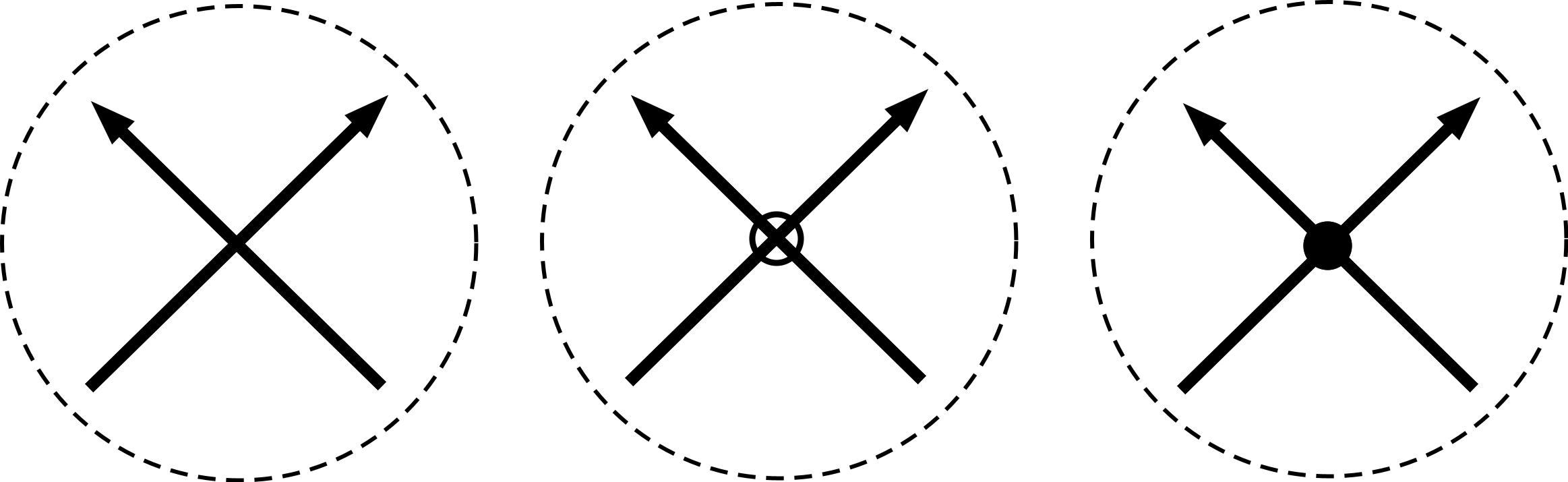}
\caption{Flat, virtual and singular crossings.}
\label{FVS}
\end{figure}

More precisely, a {\it virtual diagram} is a diagram with real and virtual crossings. A {\it flat virtual diagram} is a diagram with virtual and flat crossings. {\it Welded} and {\it unrestricted virtual diagrams} are diagrams with real and virtual crossings, differing by a forbidden move, as we shall see later. A {\it singular diagram} is a diagram with real and singular crossings and a {\it virtual singular diagram} is a diagram with real, virtual, and singular crossings.

As well as in classical knot theory  we have the same behavior for the generalized knot theories mentioned above by using the so-called {\it Reidemeister-type moves}, illustrated below that are analogous to the three Reidemeister moves of Figure~\ref{RM}.

\begin{figure}[!htb]
\centering
\includegraphics[scale=1.0]{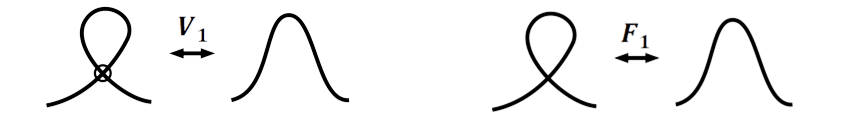}
\caption{$R_{1}$-type moves.}
\label{R1-type}
\end{figure}

\begin{figure}[!htb]
\centering
\includegraphics[scale=1.0]{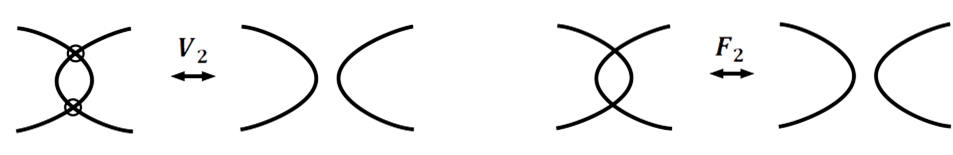}
\caption{$R_{2}$-type moves.}
\label{R2-type}
\end{figure}

\begin{figure}[!htb]
\centering
\includegraphics[scale=1.0]{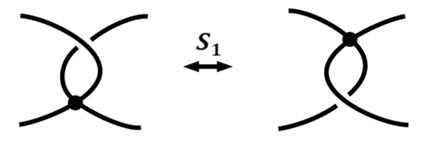}
\caption{Move allowed with real and singular crossings.}
\label{S1}
\end{figure}

\begin{rem}
We note that there no exists a Reidemeister move of type two for singular crossings, instead of it we have a move involving a real and a singular crossing, see Figure~\ref{S1}.
\end{rem}

\begin{figure}[!htb]
\centering
\includegraphics[scale=1.0]{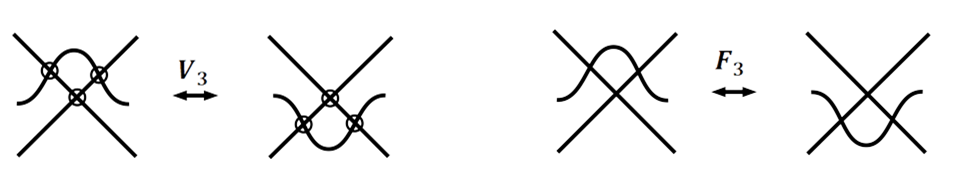}
\caption{$R_{3}$-type moves with the same crossings.}
\label{R3.1-type}
\end{figure}

\begin{figure}[!htb]
\centering
\includegraphics[scale=1.0]{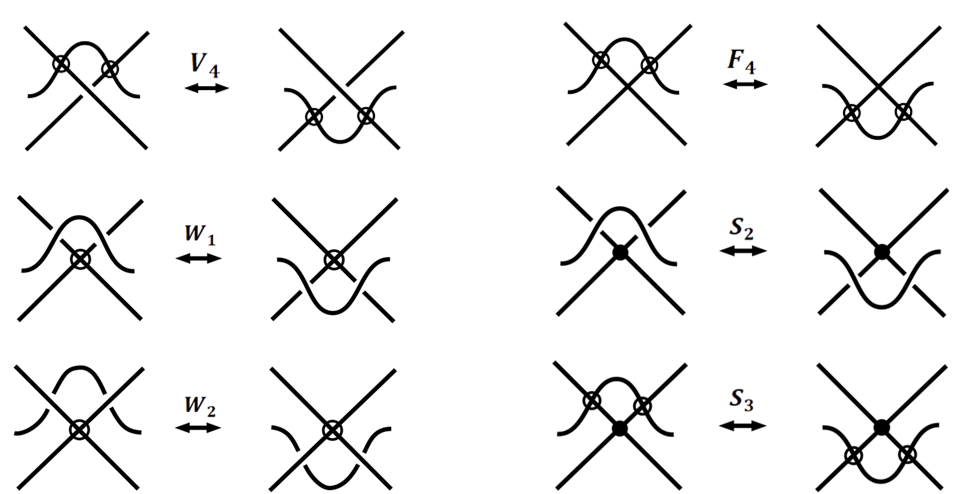}
\caption{$R_{3}$-type moves with distinct crossings.}
\label{R3.2-type}
\end{figure}

\begin{rems}
  \begin{itemize}
      \item[(i)] When considering more than one type of crossing, we have a Reidemeister-type move $R_3$ with the three crossings equal or with exactly two types of crossing; see Figures~\ref{R3.1-type} and \ref{R3.2-type};

      \item[(ii)] The so-called forbidden moves are illustrated in Figure~\ref{R3.2-type}, namely $W_1$ and $W_2$.
  \end{itemize}  
\end{rems}

\begin{table}[H]
\centering
    \begin{tabular}{c|c|c}
   {\it Diagrams}  &  {\it Crossings} & {\it Moves } \\
   \hline 
%   \vspace{0.5cm}
   Classical  & Real & $R_{1}, R_{2}, R_{3}$ \\
   \hline
%   \vspace{0.5cm}
   Virtual & Real and virtual & $R_{1}, R_{2}, R_{3}, V_{1}, V_{2}, V_{3}, V_{4}$\\
%  \vspace{0.5cm}
\hline
   Flat Virtual & Flat and virtual & $F_{1}, F_{2}, F_{3}, F_{4}, V_{1}, V_{2}, V_{3}$\\
%   \vspace{0.5cm}
\hline
   Welded & Real and virtual & $R_{1}, R_{2}, R_{3}, V_{1}, V_{2}, V_{3}, V_{4}, W_{1}$\\
%   \vspace{0.5cm}
\hline
   Unrestricted & Real and virtual & $R_{1}, R_{2}, R_{3}, V_{1}, V_{2}, V_{3}, V_{4}, W_{1}, W_{2}$ \\
%   \vspace{0.5cm}
\hline
   Singular & Real and singular & $R_{1}, R_{2}, R_{3}, S_{1}, S_{2}$\\
   \hline
   Virtual singular & Real, virtual  & $R_{1}, R_{2}, R_{3}, V_{1}, V_{2}, V_{3}, V_{4}, $\\ 
%\vspace{0.5cm}
   & and singular & $S_{1}, S_{2}, S_{3}$ \\
%   \vspace{0.5cm}
\hline
   Doodle & Flat & $F_{1}, F_{2}$\\
%   \vspace{0.5cm}
\hline
   Virtual doodle & Flat and virtual & $F_{1}, F_{2}, F_{4}$\\
   \hline
\end{tabular}
\caption{Some collections of generalized diagrams.}
\label{table}
\end{table}

We summarize in Table~\ref{table} some collections of diagrams according to the types of crossings and the allowed moves for each, characterizing the equivalence classes in each family. We include here also other generalizations of (virtual) knots and links, the so-called {\it doodles} \cite{FT} and {\it virtual doodles} \cite{BFK}, that are planar versions of (virtual) knots.

For any virtual singular link $L$ there are four types of crossing numbers: the minimal number of real crossings, the minimal number of virtual crossings, the minimal number of singular crossings and the minimal total number of crossings. In this paper, we study the fourth one: the {\it total crossing number} of a virtual singular link.
On the other hand, we have the notion of a {\it virtual singular braid} and the {\it closure} of a virtual singular braid. See Figures~\ref{braid} and \ref{closure} for an example.

%\begin{center}
\begin{figure}[!htb]
\begin{minipage}[c]{7cm}
\centering
\centering
\includegraphics[scale=0.7]{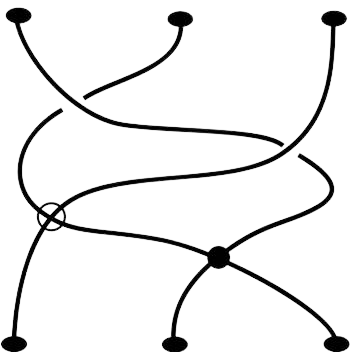}
\caption{A virtual singular braid.}
\label{braid}
\end{minipage}
\begin{minipage}[c]{9cm}
\centering
\centering
\includegraphics[scale=0.7]{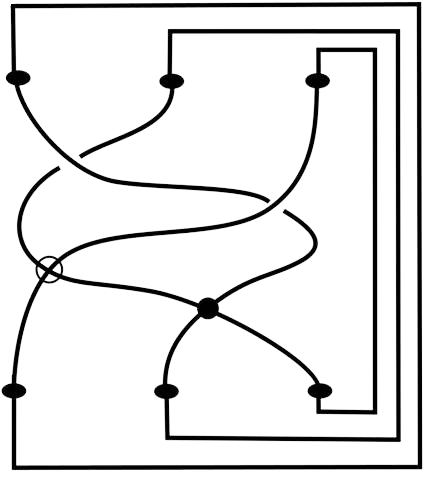}
\caption{Closure of a virtual singular braid.}
\label{closure}
\end{minipage}    
\end{figure}
 %\end{center}

Caprau, de la Pena and McGahan \cite{article2} showed that every virtual singular link is represented as the closure of a virtual singular braid. Therefore, we can define the {\it virtual singular braid index} of a virtual singular link as the minimal number of strings of a virtual singular braid whose closure is equivalent to the original virtual singular link.

In general, for any generalization of links for which we have an Alexander-type theorem (i.e. a generalization of the classical Alexander theorem \cite{A}), we may define the {\it generalized braid index} of a generalized link as the minimal number of strings of a generalized braid whose closure is equivalent to the original generalized link. In this case, we are also interested in the total crossing number of a generalized link. 
In \cite[Theorem~6.1]{BF1} the authors showed a generalized Alexander theorem for generalized knot theories under certain conditions, so, for them, the generalized braid index is well defined.

For the purposes of this work, we define the {\it generalized Reidemeister-type moves} according to Figure~\ref{GRM}. 
Note that for a $GR_{4}$ move it is necessary at least two distinct crossings.

\begin{figure}[!htb]
\centering
\includegraphics[scale=1.0]{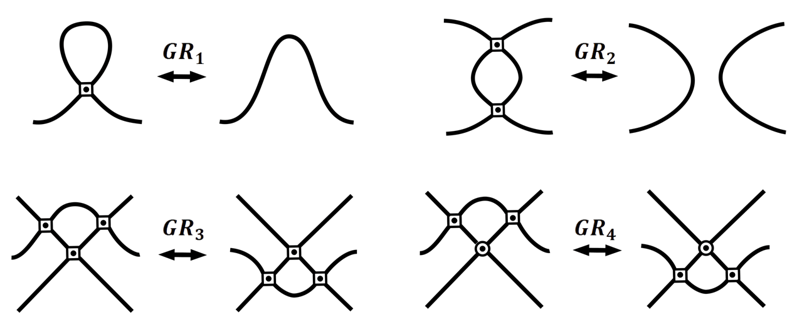}
\caption{Generalized Reidemeister-type moves.}
\label{GRM}
\end{figure}

%%%%%%%%%%%%%%%%%%%%%%%%%%%%%%%%%%  Section  %%%%%%%%%%%%%%%%%%%%%%%%%%%%%%%%%%%%%%%%%%%

\section{Seifert circles and the total crossing number of generalized links}\label{sec3}

The aim of this section is to prove that the Seifert graph of a generalized link diagram is planar and bipartite (see Theorem~\ref{sggnl}). 
To reach our goal, we first need some basic definitions and facts about graph theory.

\subsection{Graph theory}

In this subsection we record some relevant definitions and results that we shall use in this paper. It was extracted from \cite{article3}.
A {\it graph} $G$ is frequently the geometric realization of a combinatorial graph as a finite 1-dimensional $CW$-complex in $\mathbb{R}^{3}$. A graph $G$ is said to be {\it signed} if $+1, -1$ or $0$, called a {\it sign}, is assigned to each edge. If an edge is assigned by $+1$, the signed edge is said to be {\it positive}. If an edge is assigned by $-1$, the signed edge is said to be {\it negative}. 
If an edge is assigned by $0$, the signed edge is said to be {\it virtual}. A graph $G$ is said to be {\it bipartite} if any cycle has an even length and is said to be {\it planar} if $G$ is a graph embedded in $\mathbb{R}^{2}$. 
A graph $G$ is said to be {\it separable} if there are two subgraphs $J$ and $K$ such that $G = J \cup K$ and $J \cap K = \{v_{0}\}$, where $J$ and $K$ both have at least one edge and $v_{0}$ is a vertex, which is called a {\it cut vertex}. Otherwise, $G$ is said to be {\it non-separable}. A {\it block} is a maximal non-separable connected subgraph of $G$. A connected graph is decomposed into finitely many blocks and if $G_{1}, G_{2}, \ldots, G_{k}$ are the blocks, we write $G = G_{1} \ast G_{2} \ast \ldots \ast G_{k}$ and we say that $G$ is {\it the block sum} of $G_{1}, G_{2}, \ldots, G_{k}$. 

If two or more edges have common end points, then these edges are called {\it multiple edges}, and if two vertices are jointed by exactly one edge $e$, then $e$ is called a {\it singular edge} of $G$. A {\it cut edge} of $G$ is an edge whose removal increases the number of connected components. For a vertex $v$, $\mbox{star}(v)$ denotes the smallest subgraph of $G$, then $G/X$ is defined as the graph obtained from $G$ by identifying all points in $X$ to one point.

\begin{defn}
    Let G be a graph. A set $F={e_{1}, e_{2}, \ldots, e_{k}}$ of edges of $G$ is said to be {\it independent} if:
    \begin{enumerate}
        \item All $e_{j}$, for $1 \leq j \leq k$, are singular;

        \item No two of them are adjacent;

        \item There exists an edge $e_{i}$ in $F$ and a vertex $v$, one of the end points of $e_{i}$, such that $e_{1}, e_{2}, \ldots, e_{i-1}, \ldots, e_{k}$ is an independent set of $k-1$ edges in the graph $G/\mbox{star}(v)$.
    \end{enumerate}
    We assume that the empty set of edges is independent.
\end{defn}

We define $\textrm{ind}(G)$ as the maximal number of independent edges in $G$. If $G$ is a signed graph, then $\textrm{ind}_{0}(G)$ is defined to be the maximal number of independent edges in $G$, where all edges are singular and virtual. Similarly, $\textrm{ind}_{+}(G)$, $\textrm{ind}_{-}(G)$, $\textrm{ind}_{0-}(G)$, $\textrm{ind}_{+-}(G)$ are also defined. 
For example, $\textrm{ind}_{+}(G)$ is the maximal number of independent edges in $G$, where all edges are singular and positive.

The following interesting result about the index of a bipartite graph will be very useful.

\begin{thm}[{\cite[Theorem~2.4]{book2}}]\label{thmind}
    Let $G$ be a bipartite graph. If $G$ consists of blocks $G_{1}, G_{2}, \ldots, G_{k}$, then we have $$ind(G) = \sum^{k}_{i=1} ind(G_{i}).$$
\end{thm}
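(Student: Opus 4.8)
The plan is to prove the two inequalities $\mathrm{ind}(G) \le \sum_{i=1}^{k}\mathrm{ind}(G_i)$ and $\mathrm{ind}(G) \ge \sum_{i=1}^{k}\mathrm{ind}(G_i)$ separately, inducting on the number $k$ of blocks. For the inductive step I would peel off a leaf $G_1$ of the block--cut tree of $G$, so that $G = G_1 \ast G'$ with $G' = G_2 \ast \cdots \ast G_k$ and $G_1 \cap G' = \{v_0\}$ for a single cut vertex $v_0$; applying the inductive hypothesis to the bipartite graph $G'$ reduces the whole statement to the two--block identity $\mathrm{ind}(G_1 \ast G') = \mathrm{ind}(G_1) + \mathrm{ind}(G')$. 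Two structural facts are used throughout: distinct blocks meet in at most one (cut) vertex, and a family of multiple edges joining a fixed pair of vertices lies entirely in one block. The second fact shows that whether an edge is singular is decided inside its own block, so condition (1) in the definition of independence is block--local.

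For the upper bound, let $F$ be a maximum independent set of $G = G_1 \ast G'$ and put $F_1 = F \cap E(G_1)$ and $F' = F \cap E(G')$, so that $|F| = |F_1| + |F'|$. Conditions (1) and (2) restrict immediately to each piece: singularity is block--local, and since an edge of $G_1$ and an edge of $G'$ can share only the vertex $v_0$, the matching condition (2) passes to each factor. The real work is condition (3). I would argue by induction on $|F|$, following the recursive witness: the definition yields an edge $e \in F$ and an endpoint $v$ of $e$ with $F \setminus \{e\}$ independent in $G/\mathrm{star}(v)$. The point to check is that collapsing $\mathrm{star}(v)$ respects the decomposition --- if $v \ne v_0$ the collapse is confined to the block containing $v$, while if $v = v_0$ one verifies that $G/\mathrm{star}(v_0)$ is again a block sum of the corresponding quotients of $G_1$ and $G'$. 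Either way the restriction of $F \setminus \{e\}$ inherits a valid elimination sequence in each factor, giving that $F_1$ and $F'$ are independent; summing $|F_1| \le \mathrm{ind}(G_1)$ and $|F'| \le \mathrm{ind}(G')$ yields the bound.

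For the lower bound, I would take maximum independent sets $F_1 \subseteq E(G_1)$ and $F' \subseteq E(G')$ and assemble an independent set of $G$ of size $|F_1| + |F'|$. The only obstruction to $F_1 \cup F'$ being independent is adjacency at the shared vertex $v_0$: if both factors use an edge incident to $v_0$, condition (2) fails. This is resolved using condition (3): collapsing $\mathrm{star}(v_0)$ is exactly the operation that decouples the two factors at $v_0$, so after ordering the combined set to eliminate any $v_0$--edge first, the two sides no longer interact and their elimination sequences can be concatenated. The hypothesis that $G$ is bipartite, inherited by every block, is available here to control which edges incident to $v_0$ can occur in a maximum independent set, ensuring that the merge loses nothing.

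The hardest step is the clean verification that the collapse $G \mapsto G/\mathrm{star}(v)$ is compatible with the block decomposition in both directions: that an elimination sequence witnessing independence of $F$ in $G$ descends to elimination sequences for the $F_i$ in the $G_i$, and conversely that block--wise elimination sequences can be interleaved into one for $G$ without ever creating adjacency at a cut vertex. This bookkeeping around $\mathrm{star}(v_0)$, together with the control that bipartiteness gives over the maximum independent sets touching a cut vertex, is the crux; once it is in place the two counting inequalities combine to give $\mathrm{ind}(G) = \sum_{i=1}^{k}\mathrm{ind}(G_i)$.
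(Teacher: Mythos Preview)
The paper does not prove this statement: it is quoted from Murasugi--Przytycki and invoked as a black box, so there is no in-paper argument to compare your sketch against.

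On its own merits, your outline has a genuine gap in the lower bound. You correctly isolate the obstruction --- if maximum independent sets $F_1\subseteq E(G_1)$ and $F'\subseteq E(G')$ each contain an edge through the cut vertex $v_0$, then $F_1\cup F'$ violates condition~(2) in $G$ --- but your proposed fix does not repair it. Collapsing $\mathrm{star}(v_0)$ and ``eliminating any $v_0$-edge first'' speaks only to the recursive witness in condition~(3); condition~(2) must already hold for the set $F_1\cup F'$ in $G$ itself, before any collapse, and two edges meeting at $v_0$ are adjacent there regardless of elimination order. Your appeal to bipartiteness (``control\ldots over the maximum independent sets touching a cut vertex'') is precisely where the real argument would have to live, but you never say what that control is or how it lets you replace $F_1$ or $F'$ by a maximum independent set avoiding $v_0$.

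This is not a cosmetic omission. Take $G$ to be the path on three vertices $u,v_0,w$ with edges $e_1=uv_0$ and $e_2=v_0w$. It is bipartite; its blocks are the two single edges, each of index $1$; yet $e_1$ and $e_2$ share $v_0$, so no two-element edge set satisfies condition~(2), and $\mathrm{ind}(G)=1\neq 2$ under the definitions recorded in this paper. Either the statement as transcribed here is missing a hypothesis present in the original source (for instance the absence of cut edges, which does hold in the intended application to Seifert graphs of diagrams without nugatory crossings and is assumed in the companion inequality $|E|\geq 2(|V|-\mathrm{ind}-1)$), or the source uses a slightly different notion of independence. In either case the lower-bound half of your plan cannot be completed as written; you would first have to identify the correct extra ingredient and show concretely how it dissolves the conflict at $v_0$.
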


One consequence of the above theorem is the following.

\begin{cor}[{\cite[Corollary~2.3]{article3}}]\label{cor:ind}
    Let $G$ be a bipartite graph. If $G$ consists of blocks $G_{1}, G_{2}, \ldots, G_{k}$, then we have $$ind_{0}(G) = \sum^{k}_{i=1} ind_{0}(G_{i}).$$
\end{cor}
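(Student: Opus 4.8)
The plan is to derive the statement from Theorem~\ref{thmind} by rerunning the same block-additivity argument, but carrying along throughout the extra requirement that every edge used be virtual (sign~$0$) in addition to singular. Concretely, I would prove the two complementary inequalities $\textrm{ind}_0(G) \le \sum_{i=1}^{k} \textrm{ind}_0(G_i)$ and $\textrm{ind}_0(G) \ge \sum_{i=1}^{k} \textrm{ind}_0(G_i)$. Before starting I would record two bookkeeping facts. First, since distinct blocks meet only in cut vertices, the blocks partition the edge set of $G$, so every edge lies in a unique $G_i$; and each $G_i$, being a subgraph of the bipartite graph $G$, is itself bipartite, so $\textrm{ind}_0(G_i)$ is defined. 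Second, whether an edge is singular and virtual is an intrinsic property of the edge: it is unchanged when we pass between $G$ and the block containing it, and it is also unchanged by the star-collapse $G/\mathrm{star}(v)$ appearing in condition~(3), which never alters the sign of a surviving edge.

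For the inequality $\le$, I would take a maximal independent set $F$ of singular virtual edges in $G$, so that $|F| = \textrm{ind}_0(G)$, and set $F_i = F \cap E(G_i)$. Each $F_i$ consists of singular virtual edges by the intrinsic-property observation, so it suffices to show that $F_i$ is independent in $G_i$. This is exactly the restriction step underlying the proof of Theorem~\ref{thmind}: an independent set in a bipartite graph restricts to an independent set in each block, the recursive certificate of condition~(3) localizing to the block because a star-collapse at a cut vertex of one block does not interact with edges of the other blocks. Summing gives $\textrm{ind}_0(G) = \sum_i |F_i| \le \sum_i \textrm{ind}_0(G_i)$.

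For the reverse inequality, I would choose for each $i$ a maximal independent set $F_i$ of singular virtual edges in $G_i$ and show that $F = \bigcup_i F_i$ is independent in $G$. Conditions~(1) and~(2) are immediate, since the $F_i$ live in edge-disjoint blocks and two edges from different blocks can only meet at a cut vertex; the clash at such a cut vertex, together with the recursive condition~(3), is handled exactly as in Theorem~\ref{thmind}, by ordering the blocks and collapsing stars at the cut vertices one block at a time, with the bipartite hypothesis ruling out any parity obstruction or loss of singularity. This yields $\textrm{ind}_0(G) \ge \sum_i |F_i| = \sum_i \textrm{ind}_0(G_i)$, and combining the two inequalities proves the corollary.

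The main obstacle, I expect, is not the virtual bookkeeping, which is automatic since signs are untouched both by restriction to blocks and by star-collapses, but rather the need to check that the block-additivity mechanism of Theorem~\ref{thmind} survives the restriction to virtual edges. The tempting shortcut of passing to the subgraph $G'$ of $G$ consisting only of the virtual edges and applying the theorem to $G'$ does not work directly: condition~(3) collapses the \emph{full} star $\mathrm{star}(v)$ in $G$, so non-virtual edges incident to $v$ force additional identifications of the neighbors of $v$ and can make two virtual edges of $F$ become adjacent, or cease to be singular, in $G/\mathrm{star}(v)$ even though they remain independent in $G'/\mathrm{star}_{G'}(v)$. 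Thus $\textrm{ind}_0(G)$ genuinely depends on the ambient signed graph, and the cleanest route is to re-execute the argument behind Theorem~\ref{thmind} with the virtual constraint carried along, rather than to invoke the theorem as a black box on a subgraph.
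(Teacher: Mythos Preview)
The paper does not give its own proof of this corollary: it is quoted from Takeda and simply flagged as ``a consequence of'' Theorem~\ref{thmind}, so the only indication of the intended argument is that it should follow from the block-additivity of $\mathrm{ind}$. Your plan---carrying the extra ``virtual'' constraint through the proof of Theorem~\ref{thmind}, after noting that signs are unchanged by restriction to blocks and by star-collapses---is exactly that, and it is correct; your observation that one cannot simply apply Theorem~\ref{thmind} to the virtual-edge subgraph (because $\mathrm{star}(v)$ is taken in the full signed graph) is a genuine point and justifies rerunning the argument rather than black-boxing it.
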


 By $|E(G)|$ and $|V(G)|$ we denote the numbers of edges and vertices in $G$, respectively. 
To finish this subsection, we state the next result.

 \begin{thm}[{\cite[Theorem~3.7]{article1}}]\label{2.4}
     If $G$ is a connected, planar, bipartite graph without a cut edge, then we have $$|E(G)| \geq 2(|V(G)| - ind(G) - 1)$$
 \end{thm}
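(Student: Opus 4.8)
The plan is to first peel off the block structure so that the statement is reduced to a single $2$-connected block, and then to attack the $2$-connected bipartite planar case by converting the inequality, via Euler's formula, into a lower bound for $\mathrm{ind}(G)$ that I would extract from the recursive (star-contraction) clause in the definition of an independent edge set.

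\textbf{Step 1: reduction to a single block.} First I would write $G = G_1 \ast G_2 \ast \cdots \ast G_k$ as its block sum. Since $G$ has no cut edge, no block is a single edge; hence each $G_i$ contains a cycle, and as a subgraph of $G$ it is again planar, bipartite, free of cut vertices and free of cut edges (the only degenerate possibility, a two-vertex multigraph, I would dispose of by hand, where $\mathrm{ind}=0$ and $|E|\ge 2$ settle it). Counting along the block-cut tree gives $|E(G)| = \sum_{i} |E(G_i)|$ and $|V(G)| = \sum_{i} |V(G_i)| - (k-1)$, while Theorem~\ref{thmind} gives $\mathrm{ind}(G) = \sum_{i} \mathrm{ind}(G_i)$. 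Granting the inequality for each block and summing, I would obtain
$$ |E(G)| = \sum_{i=1}^k |E(G_i)| \ge \sum_{i=1}^k 2\bigl(|V(G_i)| - \mathrm{ind}(G_i) - 1\bigr) = 2\bigl(|V(G)| + (k-1)\bigr) - 2\,\mathrm{ind}(G) - 2k = 2\bigl(|V(G)| - \mathrm{ind}(G) - 1\bigr), $$
which is exactly the desired bound. So it suffices to treat a single $2$-connected block.

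\textbf{Step 2: Euler reformulation.} For a $2$-connected planar bipartite graph, every face of a planar embedding is bounded by an even cycle of length at least $4$ (no edge repeats on a boundary walk because $G$ is bridgeless, and every cycle is even because $G$ is bipartite). Using Euler's formula $|V(G)| - |E(G)| + |F(G)| = 2$, the target inequality $|E(G)| \ge 2(|V(G)| - \mathrm{ind}(G) - 1)$ becomes equivalent to
$$ 2\,\mathrm{ind}(G) + |F(G)| \ \ge\ |V(G)|. $$
Reading $2\,\mathrm{ind}(G)$ as the number of vertices covered by a maximum independent edge set, this asserts that the number of uncovered vertices is at most the number of faces, which is the form I would try to prove.

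\textbf{Step 3 and the main obstacle.} This last inequality is the heart of the matter. I would argue by induction on $|V(G)|$, the prototypical base case being an even cycle $C_{2n}$, where $|F|=2$ and $\mathrm{ind}(C_{2n}) = n-1$, so $2(n-1)+2 = 2n = |V|$ with equality. For the inductive step I would invoke condition~(3) in the definition of independence: choosing a suitable vertex $v$ that is an endpoint of a singular edge, the recursive clause yields $\mathrm{ind}(G) \ge 1 + \mathrm{ind}\bigl(G/\mathrm{star}(v)\bigr)$, which lowers the vertex count and lets me feed $G/\mathrm{star}(v)$ into the induction hypothesis. The hard part will be the bookkeeping under this contraction: $G/\mathrm{star}(v)$ need not stay simple, bipartite, or $2$-connected, since collapsing a star merges faces, creates multiple edges, and can produce odd cycles. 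The choice of $v$ must therefore be constrained — for instance an extreme vertex of the planar embedding or a vertex on a shortest face — so that the resulting change in $|F|$ can be compared precisely with $\deg(v)$ and so that the ``one extra independent edge'' exactly compensates the drop in $|V| - |F|$. Tracking these multiplicities is precisely what the refined indices $\mathrm{ind}_0, \mathrm{ind}_{+}, \dots$ and Corollary~\ref{cor:ind} are built to manage, and keeping the planarity and parity accounting consistent through every contraction is where I expect the genuine work to lie.
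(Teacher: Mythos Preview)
The paper does not prove this theorem at all: it is stated as a quotation from the literature, namely \cite[Theorem~3.7]{article1} (Ohyama), and is used as a black box in the proof of Theorem~\ref{thm1}. There is therefore no ``paper's own proof'' to compare your proposal against.

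As for the proposal itself, you correctly reduce to a single $2$-connected block via Theorem~\ref{thmind} and correctly rewrite the target inequality through Euler's formula as $2\,\mathrm{ind}(G)+|F(G)|\ge |V(G)|$. But your Step~3 is, as you yourself say, only a plan: the inductive step hinges on controlling what happens to bipartiteness, $2$-connectivity, and the face count under the star-contraction $G\mapsto G/\mathrm{star}(v)$, and you have not specified a choice of $v$ that makes this bookkeeping go through, nor shown that such a $v$ always exists. So the proposal is an outline with the core argument missing, not a proof; if you want a complete argument you will need to consult Ohyama's original paper or the Murasugi--Przytycki memoir \cite{book2}, which is where the index machinery is developed in full.
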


\subsection{The Seifert graph of generalized knots and links diagrams}

{\it Generalized Seifert circles} are circles obtained by smoothing all crossings of a generalized link diagram as illustrated in Figure~\ref{Smoothing}.

\begin{figure}[!htb]
\centering
\includegraphics[scale=0.13]{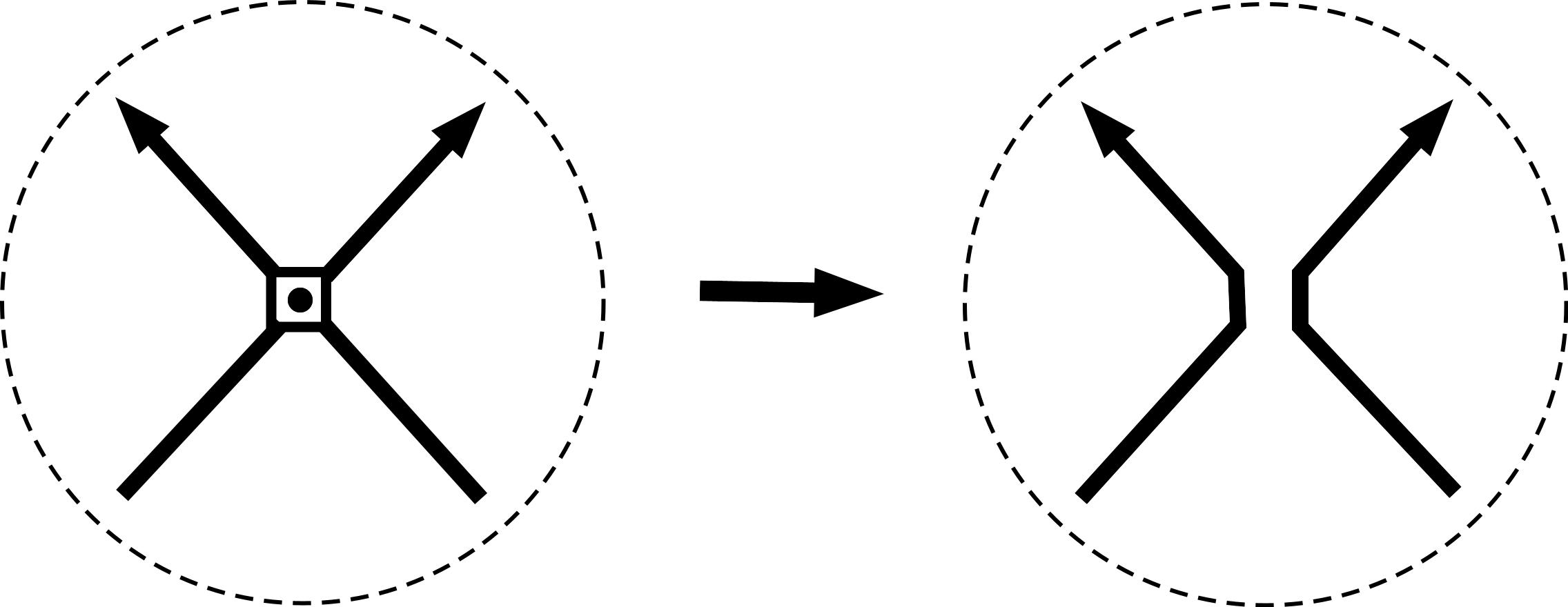}
\caption{A smoothing for a generalized crossing.}
\label{Smoothing}
\end{figure}

\begin{exm}\label{exm1}
After smoothing the crossings in the virtual  singular knot illustrated in Figure~\ref{Seifert circles}, we obtain two Seifert circles.
 
 \begin{figure}[!htb]
\centering
\includegraphics[scale=0.15]{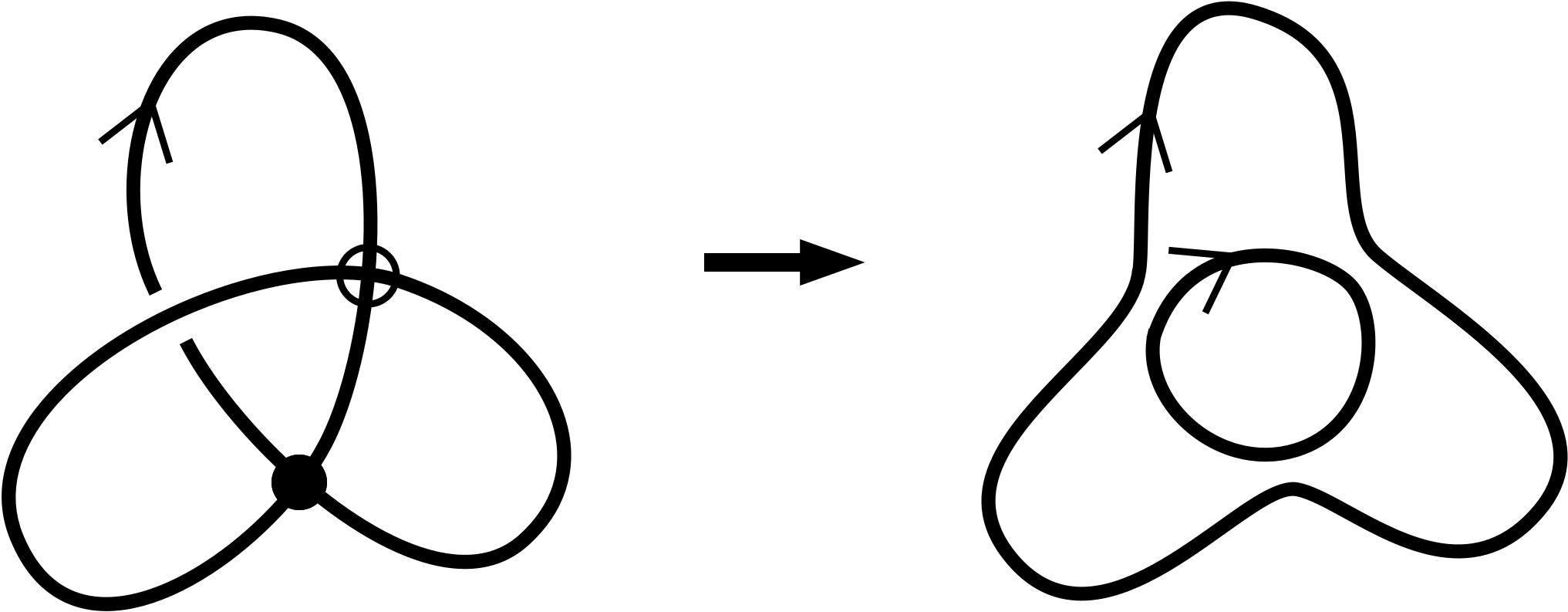}
\caption{The Seifert circles of a virtual singular knot.}
\label{Seifert circles}
\end{figure}
\end{exm} 

The following definitions were motivated by the ones related to virtual crossings, see \cite[Section~2]{article3}.
Let $L$ be a generalized link and $D$ its diagram. If $C$ is a Seifert circle, then it decomposes the plane into two closed regions $U$ and $V$ meeting along $C$. We say that $C$ is {\it separating} if both $(U \setminus C) \cap D$ and $(V \setminus C) \cap D$ are non-empty. Otherwise, $C$ is {\it non-separating}. If $C$ is separating, then let $D_{1}$ and $D_{2}$ be the diagrams constructed from $D \cap U$ and $D \cap V$, respectively, by filling the gaps with arcs from $C$ when they are necessary. 
We say that $D$ is a $\ast$-product of $D_{1}$ and $D_{2}$, and write $D=D_{1} \ast D_{2}$. 
A diagram $D$ is {\it special} if it does not decompose as a $\ast$-product: in other words, $D$ is special if and only if it has no separating Seifert circle. A general oriented diagram $D$ can be decomposed along its separating Seifert circles into a product $D_{1} \ast D_{2} \ldots \ast D_{r}$ of special diagrams. A crossing of $D$ is said to be {\it nugatory} if the number of connected components of a smoothed diagram at the crossing is greater than that of $D$.

The notion of Seifert graphs was first introduced in \cite{book2} for classical links. 
For a given diagram $D$ of a generalized link $L$, let $S(D)$ be the number of Seifert circles of $D$ and $c(D)$ the number of crossings in $D$. The {\it Seifert graph} $\Gamma (D)$ is a graph with $S(D)$ vertices $v_{1}, v_{2}, \ldots, v_{S(D)}$ and $c(D)$ edges $e_{1}, e_{2}, \ldots, e_{c(D)}$. Each vertex corresponds to a Seifert circle and each edge corresponds to a crossing. Two distinct vertices $v_{i}$ and $v_{j}$ are connected by $e_{k}$ if the two Seifert circles $S_{i}$ and $S_{j}$ (corresponding to $v_{i}$ and $v_{j}$, respectively) are joined by the crossing $c_{k}$ (corresponding to $e_{k}$).

\begin{exm}
For the virtual singular knot considered in Example~\ref{exm1} (see also Figure~\ref{Seifert circles}), its Seifert graph has two vertices and three edges, see Figure~\ref{Seifert graph}.
 
 \begin{figure}[!htb]
\centering
\includegraphics[scale=0.15]{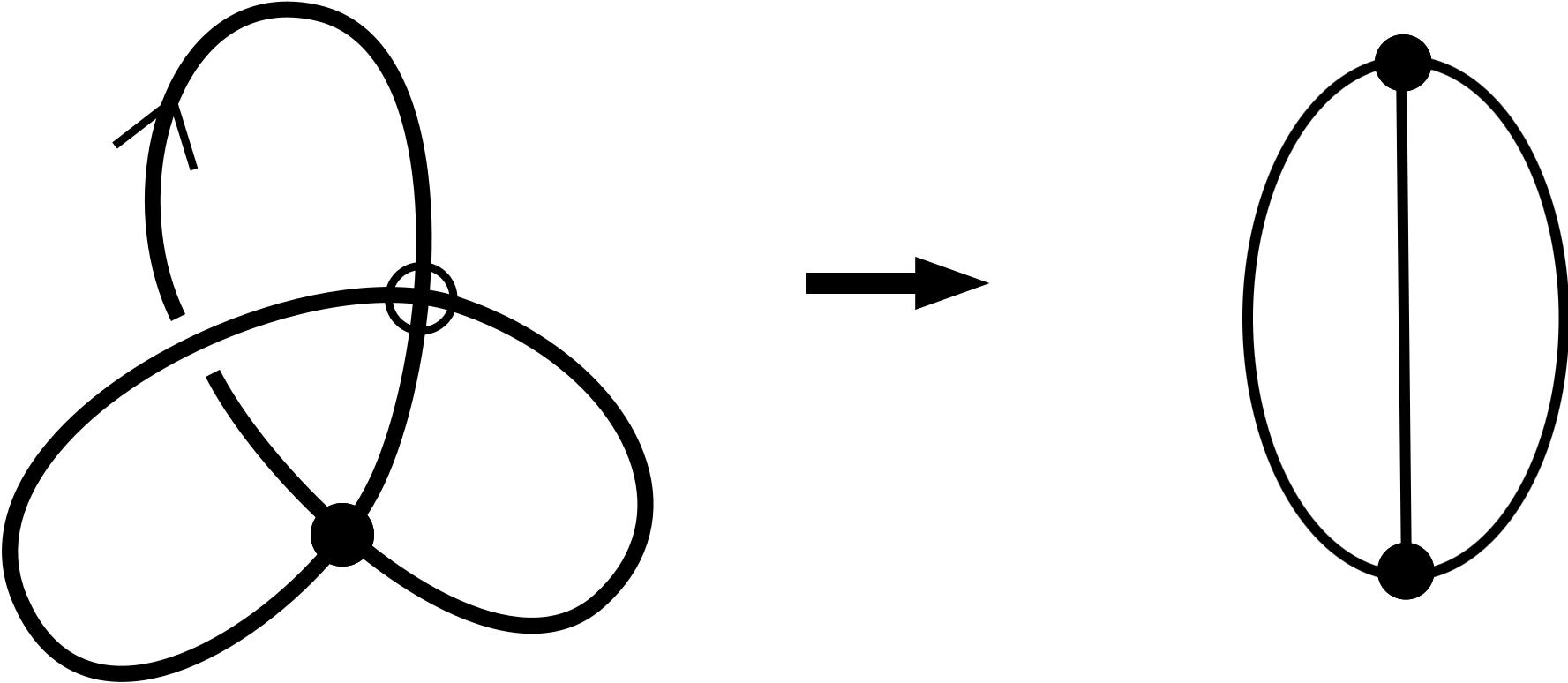}
\caption{The Seifert graph of a virtual singular knot.}
\label{Seifert graph}
\end{figure}
\end{exm}

Let $D$ be a non-classical oriented diagram, we denote by $cl(D)$ an oriented diagram, such that any non-classical crossing in $D$ is transformed into a classical crossing.
\begin{rem}
The choice of this transformation is not unique, since there are positive and negative real crossings.
\end{rem}

\begin{lem}\label{sg}
Let $D$ be a non-classical oriented diagram, then the Seifert graphs $\Gamma(D)$ and $\Gamma(cl(D))$ are equals.
\end{lem}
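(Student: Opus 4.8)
The plan is to show that passing from a non-classical diagram $D$ to the classicalized diagram $cl(D)$ changes neither the set of Seifert circles nor the incidence data that defines the Seifert graph, so that $\Gamma(D)$ and $\Gamma(cl(D))$ coincide as graphs. The key observation is that the Seifert graph is built entirely from two pieces of combinatorial data: the collection of Seifert circles (which become the vertices) and, for each crossing, the ordered pair of circles that meet at that crossing (which determine the edges and their endpoints). Neither of these depends on the over/under (or virtual/singular) decoration of the crossing; both are determined purely by the smoothing picture in Figure~\ref{Smoothing}. I would make this precise by first recalling that the operation $cl$ replaces each non-classical crossing by a classical one \emph{in place}, leaving the underlying immersed curve in the plane, its orientation, and the local smoothing data unchanged at every crossing.

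First I would establish a bijection between the crossings of $D$ and the crossings of $cl(D)$: by definition $cl$ acts crossing-by-crossing, transforming a non-classical crossing into a classical one while fixing every other crossing, so the total set of crossing sites and their locations in the plane are identical. Second, I would argue that the Seifert circles of $D$ and of $cl(D)$ are literally the same circles. This is because smoothing a crossing---as shown in Figure~\ref{Smoothing}---is governed only by the orientations of the four local strands, which $cl$ preserves, and the smoothing of a classical crossing and of its non-classical counterpart produce the identical local reconnection. Hence performing all smoothings on $D$ and on $cl(D)$ yields the same disjoint union of oriented circles in the plane, so $S(D)=S(cl(D))$ and there is a canonical identification of the vertex sets $V(\Gamma(D))$ and $V(\Gamma(cl(D)))$.

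Finally I would check that the edges match under these identifications. Each crossing $c_k$ of $D$ corresponds to a unique crossing of $cl(D)$ by the bijection above, and the two Seifert circles joined at that crossing are determined by which of the smoothed circles pass through the crossing site---again purely local, orientation-dependent data that $cl$ leaves untouched. Therefore the edge $e_k$ connects the same pair of vertices $v_i,v_j$ in both graphs. Since vertices, edges, and the incidence relation all agree, we conclude $\Gamma(D)=\Gamma(cl(D))$.

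The argument is essentially a bookkeeping verification rather than a deep one, and I expect no serious obstacle. The one point requiring care is the precise meaning of ``equal'' in the statement: I would interpret it as equality of abstract (unsigned) graphs, since $cl$ typically alters the \emph{signs} of edges (the Remark preceding the lemma notes that the classicalizing choice is not unique, as a real crossing may be positive or negative). The proof therefore only claims equality of the underlying graph structure---vertices, edges, and incidences---and deliberately does not assert that the induced signs coincide, which is exactly what is needed for the later applications where only planarity, bipartiteness, and the index $\mathrm{ind}(\Gamma(D))$ are invoked.
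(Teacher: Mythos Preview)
Your proposal is correct and follows essentially the same approach as the paper, which simply declares the result ``straightforward from the definition of Seifert graphs since $\Gamma(cl(D))$ is well-defined.'' You have merely unpacked that one-line justification into an explicit verification that the vertex set, edge set, and incidence relation are unchanged by $cl$, and your closing remark about equality being of the underlying unsigned graph is a helpful clarification that the paper leaves implicit.
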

\begin{proof}
The proof is straightforward from the definition of Seifert graphs since $\Gamma(cl(D))$ is well-defined.
\end{proof}

\begin{prop}\label{sgeven}
The Seifert graph of any classical link diagram is bipartite.    
\end{prop}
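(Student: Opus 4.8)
The plan is to prove bipartiteness by producing an explicit proper $2$-coloring of the vertices of $\Gamma(D)$, since (as recalled in the graph-theory subsection) a graph is bipartite precisely when every cycle has even length, and this is equivalent to the existence of a $2$-coloring of the vertices in which the two endpoints of every edge receive different colors. As $D$ is a classical diagram, all crossings are real, so I can apply Seifert's algorithm honestly: smoothing every crossing coherently produces the Seifert circles (the vertices of $\Gamma(D)$), and capping each Seifert circle by a disk pushed to its own level and reconnecting the disks by a half-twisted band at each crossing yields the associated Seifert surface $F$, whose bands are in bijection with the edges of $\Gamma(D)$.

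The coloring will come from the orientability of $F$. Each Seifert circle $C$ bounds one of these disks, and fixing an orientation of $F$ orients every disk, i.e. assigns to it a normal direction, a label in $\{+,-\}$ (``up'' or ``down''). I color the vertex $v_C$ of $\Gamma(D)$ by this label of the disk bounded by $C$. It then remains to check that every edge of $\Gamma(D)$ joins vertices of opposite color. This is where the real-crossing structure is essential: the band that Seifert's algorithm inserts at a crossing carries a half-twist (recording the over/under information and the sign of the crossing), and for the total surface $F$ to be orientable the orientation must extend coherently across that half-twisted band. A half-twisted band can be consistently oriented only when it joins the ``$+$'' face of one disk to the ``$-$'' face of the other, so the two disks it connects necessarily carry opposite normal labels. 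Hence the two Seifert circles joined by any crossing receive opposite colors, and no edge of $\Gamma(D)$ is monochromatic.

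I expect the main obstacle to be exactly this local step: making the claim ``half-twisted band forces opposite normal orientations'' rigorous, which requires pinning down the orientation conventions at a crossing and verifying it for both the positive and negative real crossings of Figure~\ref{real crossings}. Once that is settled, the conclusion is immediate: traversing any cycle $v_{C_{i_1}}, v_{C_{i_2}}, \ldots, v_{C_{i_m}}, v_{C_{i_1}}$ of $\Gamma(D)$, the color flips across each of the $m$ edges and must return to its starting value, forcing $m$ to be even. Thus $\Gamma(D)$ has no odd cycle and is bipartite. As a cross-check one can verify the coloring agrees, in the concentric (braid-closure) case, with the parity of the nesting depth of the Seifert circles, and in the side-by-side case with the clockwise/counterclockwise orientation of the circles; the surface-orientation description unifies these two features into the single label used above.
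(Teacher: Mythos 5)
Your overall strategy (exhibit an explicit proper $2$-coloring of $\Gamma(D)$) is reasonable and is the direct counterpart of the paper's contrapositive argument (an odd cycle would force a non-orientable Seifert surface); both hinge on the same local analysis at a band. However, the specific coloring you propose does not work, and the local claim you yourself flag as the main obstacle --- ``a half-twisted band can be consistently oriented only when it joins disks of opposite normal label'' --- is false. Counterexample: the standard trefoil diagram (the closure of $\sigma_1^3$). Its two Seifert circles are nested and, because the two arcs at each smoothed crossing must be coherently parallel, nested adjacent Seifert circles always have the \emph{same} rotational sense in the plane. The standard orientation of the Seifert surface is exactly the one in which each disk's induced boundary orientation agrees with its Seifert circle (this is forced, since $\partial F$ must carry the link orientation), so both disks receive the same normal label ``up''; yet they are joined by three crossings. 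The point is that for nested disks the half-twist of the band is compensated by the height change and by which face of each disk the band attaches to, so orientability of $F$ does \emph{not} force adjacent disks to have opposite normals. Consequently your coloring is not proper and the argument collapses; your closing ``cross-check'' is also inconsistent, since for concentric circles the normal label equals the rotational sense and does \emph{not} alternate with nesting depth.

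The repair is to use a coloring that genuinely combines nesting depth with orientation. For a point $p$ off the Seifert circles let $w(p)=\sum_i \operatorname{wind}(C_i,p)$, and set $\lambda(C)=w(p)$ for $p$ in the region immediately to the left of $C$ (this is well defined since the Seifert circles are disjoint). At a smoothed crossing the two arcs of the adjacent circles $C_1,C_2$ are parallel and cobound a small channel containing the crossing; the channel lies to the right of one arc and to the left of the other, and crossing a coherently oriented arc changes $w$ by $\pm 1$, so $\lambda(C_1)$ and $\lambda(C_2)$ differ by exactly $1$. Hence $\lambda \bmod 2$ is a proper $2$-coloring and $\Gamma(D)$ is bipartite. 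This purely planar argument is rigorous and avoids the Seifert surface entirely; alternatively one can simply argue, as the paper does, by contradiction from the orientability of the Seifert surface, but then the same local bookkeeping must be done to show that an odd cycle of disks and bands really produces a M\"obius configuration.
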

\begin{proof}
Suppose that the Seifert graph of a given classical link diagram $D$ is not bipartite, i.e. it has a cycle with odd length. By the construction of the Seifert algorithm (see \cite[Theorem~5.1.1]{book1}), we have that $D$ is the boundary of a non-orientable surface, but this is a contradiction because all Seifert surfaces are orientable.    
\end{proof}

\begin{prop}
Let $D$ be a non-classical oriented diagram, then we have that the Seifert graph $\Gamma(D)$ is bipartite.   
\end{prop}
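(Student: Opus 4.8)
The plan is to reduce the claim to the classical case already settled, using the two results just established. Given a non-classical oriented diagram $D$, I would first pass to the associated diagram $cl(D)$ obtained by replacing every non-classical crossing of $D$ by a (real) classical crossing; by construction $cl(D)$ is a classical link diagram. The key observation is that the Seifert graph is insensitive to the type of crossing: smoothing a crossing as in Figure~\ref{Smoothing} produces the same Seifert circles regardless of whether the crossing is real, virtual, flat or singular, and the edges of the Seifert graph only record which pairs of Seifert circles are joined by a crossing. This is exactly the content of Lemma~\ref{sg}, which yields the equality $\Gamma(D) = \Gamma(cl(D))$.

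Next I would invoke Proposition~\ref{sgeven}: since $cl(D)$ is a classical link diagram, its Seifert graph $\Gamma(cl(D))$ is bipartite, because the surface produced by the Seifert algorithm is orientable and hence its associated graph contains no cycle of odd length. Combining the two facts gives that $\Gamma(D) = \Gamma(cl(D))$ is bipartite, which is the desired conclusion.

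In this argument there is essentially no obstacle to overcome, since the substantive work has been carried out in the two preceding results; the proposition is a short corollary of their combination. The only point requiring a moment of care is to confirm that $cl(D)$ is indeed a well-defined classical diagram to which Proposition~\ref{sgeven} applies. This holds because replacing each non-classical crossing by a real one produces a diagram with real crossings only, and the ambiguity in the sign chosen for each replacement, noted in the Remark preceding Lemma~\ref{sg}, is irrelevant here, as the sign of a crossing does not affect the underlying Seifert graph.
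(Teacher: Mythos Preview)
Your proposal is correct and follows essentially the same approach as the paper's own proof: invoke Lemma~\ref{sg} to identify $\Gamma(D)$ with $\Gamma(cl(D))$, then apply Proposition~\ref{sgeven} to the classical diagram $cl(D)$. The additional commentary you give on why $cl(D)$ is well-defined and why the sign ambiguity is harmless is accurate and simply makes explicit what the paper leaves implicit.
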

\begin{proof}
From Lemma~\ref{sg} we obtain $\Gamma(D) = \Gamma(cl(D))$ and then from Proposition~\ref{sgeven} it follows that the Seifert graph $\Gamma(cl(D))$ is bipartite.
\end{proof}

Therefore, we can conclude the main result of this section. 
 
\begin{thm}\label{sggnl}
The Seifert graph of generalized knot and link diagrams is planar and bipartite with $|E(\Gamma(D))| = c(D)$  and $|V(\Gamma(D))| = S(D)$.
\end{thm}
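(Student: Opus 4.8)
The plan is to assemble the statement from facts already in hand, treating its three assertions separately. The counts $|V(\Gamma(D))| = S(D)$ and $|E(\Gamma(D))| = c(D)$ are immediate from the definition of the Seifert graph: by construction $\Gamma(D)$ carries exactly one vertex per Seifert circle and exactly one edge per crossing of $D$, so there is nothing further to check for these two equalities. Bipartiteness is likewise already recorded in the excerpt: when $D$ is classical it is Proposition~\ref{sgeven}, and when $D$ is non-classical it follows from the proposition immediately preceding this theorem, whose proof uses Lemma~\ref{sg} to transfer the classical statement of Proposition~\ref{sgeven} to $\Gamma(D) = \Gamma(cl(D))$. Hence the only genuinely new content is planarity, and that is where I would concentrate the argument.

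For planarity I would again reduce to the classical case via Lemma~\ref{sg}. Given a generalized diagram $D$, if $D$ is already classical then its Seifert graph is planar by the classical theorem proved in detail in \cite{book1}. If $D$ is non-classical, I would apply $cl$ to replace every non-classical crossing by a classical one; by Lemma~\ref{sg} we have $\Gamma(D) = \Gamma(cl(D))$, and since $cl(D)$ is a classical diagram, $\Gamma(cl(D))$ is planar, again by \cite{book1}. Combining the two cases, $\Gamma(D)$ is planar for every generalized diagram $D$, which together with the bipartiteness and the counts above completes the proof.

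Alternatively, one can argue planarity directly and uniformly, without splitting into cases: the Seifert circles of $D$ are pairwise disjoint simple closed curves in the plane, so one may place a single vertex in a neighbourhood of each circle and draw, for each crossing, an arc joining the two vertices of the circles it merges, routed through a small neighbourhood of that crossing point. Because the circles are disjoint and each crossing contributes its own local arc, these arcs can be chosen to meet only at the vertices, yielding an explicit planar embedding of $\Gamma(D)$ in $\mathbb{R}^{2}$.

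The main obstacle is planarity, since bipartiteness and the counts are either definitional or already established. Within planarity, the subtle point to get right is that the reduction through Lemma~\ref{sg} is legitimate: one must be sure that $cl(D)$ is a bona fide classical diagram so that the cited planarity result of \cite{book1} applies verbatim, and that passing from $D$ to $cl(D)$ does not alter the Seifert circles or their adjacency pattern, which is precisely what Lemma~\ref{sg} guarantees. If instead one prefers the direct geometric embedding, the care lies in verifying that the arcs dual to the crossings can be routed without mutual intersections, which follows from the disjointness of the Seifert circles.
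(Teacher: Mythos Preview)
Your proposal is correct and aligns with the paper's approach: the paper does not write out a separate proof for Theorem~\ref{sggnl} but presents it as an immediate consequence of the preceding results (Lemma~\ref{sg}, Proposition~\ref{sgeven}, and the unnamed proposition on bipartiteness for non-classical diagrams), together with the definitional counts and the classical planarity result from \cite{book1}. Your write-up is simply more explicit about the planarity step, but the reduction via $\Gamma(D)=\Gamma(cl(D))$ is exactly what the paper has in mind.
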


The following result is immediate from the previous theorem.

\begin{cor}
Let $L$ denote a virtual singular link or a doodle or a virtual doodle. 
The Seifert graph of a diagram $D$ of $L$ is planar and bipartite with $|E(\Gamma(D))| = c(D)$  and $|V(\Gamma(D))| = S(D)$.   
\end{cor}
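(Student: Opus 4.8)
The plan is to dispatch the three assertions of Theorem~\ref{sggnl} separately, observing that the bulk of the work has already been done by Lemma~\ref{sg} and by the classical facts recalled in the introduction. First I would record the two numerical equalities, which are immediate from the construction of the Seifert graph. By definition $\Gamma(D)$ has exactly one vertex $v_i$ for each Seifert circle $S_i$ of $D$ and exactly one edge $e_k$ for each crossing $c_k$ of $D$; hence $|V(\Gamma(D))| = S(D)$ and $|E(\Gamma(D))| = c(D)$, with no further argument required.

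Next, for bipartiteness I would split according to whether $D$ is classical. If $D$ is classical, this is precisely Proposition~\ref{sgeven}. If $D$ is non-classical, it is the Proposition stated immediately before the theorem, whose proof in turn reduces to the classical case via Lemma~\ref{sg}. In either situation one may argue uniformly: Lemma~\ref{sg} gives the equality $\Gamma(D) = \Gamma(cl(D))$, where $cl(D)$ is obtained by turning every non-classical crossing of $D$ into a classical one, and applying Proposition~\ref{sgeven} to the classical diagram $cl(D)$ shows that $\Gamma(cl(D))$, and hence $\Gamma(D)$, is bipartite.

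The remaining and most substantive point is planarity, which I would handle by the same reduction. Since $cl(D)$ is a genuine classical link diagram (all of its crossings are real), the classical result recalled in the introduction---that the Seifert graph of a classical diagram is planar, see \cite{book1}---applies and shows that $\Gamma(cl(D))$ is planar. Because Lemma~\ref{sg} asserts the equality $\Gamma(D) = \Gamma(cl(D))$ and planarity is an invariant of the isomorphism type of a graph, $\Gamma(D)$ is planar as well. Combining the three parts then yields the full statement.

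The step I expect to require the most care is justifying the reduction to $cl(D)$ cleanly: one must check that replacing non-classical crossings by classical ones produces a legitimate classical diagram whose underlying planar projection is unchanged, so that the cited classical planarity theorem genuinely applies, and that the equality in Lemma~\ref{sg} really does transport planarity (and bipartiteness) back to $\Gamma(D)$. Once this is in place there is no deeper obstacle, since the nontrivial content has already been isolated into Lemma~\ref{sg} and the classical theorem. Alternatively, planarity could be established directly by embedding $\Gamma(D)$ in the plane, using the disjoint Seifert circles as vertex locations and routing each edge through the point of its corresponding crossing; but the reduction argument is shorter and reuses the machinery already available.
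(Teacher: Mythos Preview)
Your argument is correct, but it does far more work than the paper. In the paper this corollary is dispatched in one line---``immediate from the previous theorem''---since virtual singular links, doodles, and virtual doodles are all particular instances of generalized links, so Theorem~\ref{sggnl} applies directly. What you have written is essentially a self-contained proof of Theorem~\ref{sggnl} itself (reducing to $cl(D)$ via Lemma~\ref{sg}, then invoking Proposition~\ref{sgeven} and the classical planarity result from \cite{book1}), rather than simply citing that theorem once it is in hand. Your version has the virtue of being explicit about how planarity is obtained---a point the paper leaves implicit when stating Theorem~\ref{sggnl}---but for the corollary a one-line appeal to the theorem is all that is needed.
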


\begin{rem}\label{rem:vs}
For virtual singular links there are some forbidden moves, see \cite[Figure~4]{article2}. 
Therefore, if we allow one or more forbidden moves in the collection of virtual singular link diagrams, we obtain other generalized links for which Theorem~\ref{sggnl} holds.
For example, the closure of generalized braids obtained in some quotients of virtual singular braid groups, see Definition~20 and Figure~7 of \cite{O}.
\end{rem}

We define the index of a diagram of a generalized link as the index of its Seifert graph as follows.

\begin{defn}\label{def:ind}
For an oriented diagram D of a oriented generalized link L, we define $$ind(D) = ind(\Gamma(D)).$$
\end{defn}

\begin{thm}\label{thm1}
If D is a diagram of a generalized link L with no nugatory crossings, then $$tc(D) \geq 2(S(D) - \textrm{ind}(D) - 1),$$ where $tc(D)$ is the total crossing number of $D$.
\end{thm}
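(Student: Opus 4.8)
The plan is to reduce the statement entirely to graph theory and then invoke Theorem~\ref{2.4}. By Theorem~\ref{sggnl} the Seifert graph $\Gamma(D)$ is planar and bipartite, with $|E(\Gamma(D))| = c(D)$ and $|V(\Gamma(D))| = S(D)$. Since $tc(D)$ is nothing but the number of crossings appearing in the fixed diagram $D$, we have $tc(D) = c(D) = |E(\Gamma(D))|$, while $S(D) = |V(\Gamma(D))|$ and, by Definition~\ref{def:ind}, $\textrm{ind}(D) = \textrm{ind}(\Gamma(D))$. Thus the asserted inequality is precisely $|E(\Gamma(D))| \geq 2(|V(\Gamma(D))| - \textrm{ind}(\Gamma(D)) - 1)$, which is the conclusion of Theorem~\ref{2.4}. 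So the entire argument reduces to verifying that $\Gamma(D)$ meets the hypotheses of that theorem: connected, planar, bipartite, and without a cut edge.

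Planarity and bipartiteness are already granted by Theorem~\ref{sggnl}, so the first thing I would do is translate the hypothesis ``no nugatory crossings'' into the graph condition ``no cut edge''. The key observation is that smoothing a single crossing $c$ of $D$ produces a diagram $D_c$ whose Seifert circles coincide with those of $D$, since smoothing the remaining crossings of $D_c$ reproduces the Seifert algorithm applied to $D$; consequently $\Gamma(D_c) = \Gamma(D) \setminus e$, where $e$ is the edge associated with $c$. Moreover, the number of connected components of any diagram equals the number of connected components of its Seifert graph, because two Seifert circles lie in the same piece of the diagram exactly when they can be joined through a chain of crossings. Combining these, $c$ is nugatory if and only if deleting $e$ raises the number of components of $\Gamma(D)$, that is, if and only if $e$ is a cut edge; hence ``$D$ has no nugatory crossing'' is equivalent to ``$\Gamma(D)$ has no cut edge''. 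I would also record that bipartiteness forbids odd cycles, in particular loops, so every edge of $\Gamma(D)$ joins two distinct vertices, which is what legitimises the identification of crossings with genuine edges between distinct Seifert circles.

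With the cut-edge condition secured, Theorem~\ref{2.4} applies to $\Gamma(D)$ and delivers the inequality at once. The hard part will be the connectivity requirement of Theorem~\ref{2.4}: the correspondence above shows that $\Gamma(D)$ is connected exactly when $D$ is a connected (nonsplit) diagram, and this is the regime in which the inequality can hold, since a split two-component trivial diagram already violates it; this is where the data of $L$ must be read as furnishing a connected diagram. To make the bookkeeping transparent in the presence of cut vertices, I would pass to the block decomposition $\Gamma(D) = G_{1} \ast \cdots \ast G_{k}$. Each block is connected, planar and bipartite, and because there are no cut edges no block is a single edge, so every $G_{i}$ is non-separable with more than one edge and is therefore itself free of cut edges; hence Theorem~\ref{2.4} gives $|E(G_{i})| \geq 2(|V(G_{i})| - \textrm{ind}(G_{i}) - 1)$ for each $i$. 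Summing over $i$ and using $\sum_{i}|E(G_{i})| = |E(\Gamma(D))|$, the additivity $\sum_{i}\textrm{ind}(G_{i}) = \textrm{ind}(\Gamma(D))$ from Theorem~\ref{thmind}, and the block-cut-tree identity $\sum_{i}(|V(G_{i})| - 1) = |V(\Gamma(D))| - 1$ valid for a connected graph, the number $k$ of blocks cancels and one obtains exactly $|E(\Gamma(D))| \geq 2(|V(\Gamma(D))| - \textrm{ind}(\Gamma(D)) - 1)$. Rewriting this through $tc(D)$, $S(D)$ and $\textrm{ind}(D)$ finishes the proof.
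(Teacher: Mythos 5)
Your proposal is correct and follows essentially the same route as the paper: identify $tc(D)=c(D)=|E(\Gamma(D))|$, $S(D)=|V(\Gamma(D))|$, $\textrm{ind}(D)=\textrm{ind}(\Gamma(D))$ via Theorem~\ref{sggnl} and Definition~\ref{def:ind}, and then apply Theorem~\ref{2.4} to $\Gamma(D)$. In fact you supply two points the paper's one-line proof glosses over, namely that the real role of the ``no nugatory crossings'' hypothesis is to guarantee that $\Gamma(D)$ has no cut edge (the paper instead invokes it only for the tautological identity $c(D)=tc(D)$), and that the connectivity hypothesis of Theorem~\ref{2.4} forces one to read $D$ as a connected (nonsplit) diagram.
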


\begin{proof}

From Definition~\ref{def:ind}, Theorems~\ref{2.4} and \ref{sggnl} we obtain that $c(D) \geq 2(S(D) - ind(D) - 1).$ Since $D$ is a diagram with no nugatory crossings, it follows that $c(D) = tc(D)$. Therefore, we have the desired result.
\end{proof}

We note that Theorem~\ref{thm1} gives the generalized version of the inequality established for the classical case in \cite[Theorem~3.7A]{article1} and for the virtual case in \cite[Theorem~2.6]{article3}.

%%%%%%%%%%%%%%%%%%%%%%%%%%%%%%%%%%  Section  %%%%%%%%%%%%%%%%%%%%%%%%%%%%%%%%%%%%%%%%%%%

\section{An inequality involving total crossing number and generalized braid index}\label{sec4}

It is important to emphasize that, in this section, we are considering generalized links, as links that admit an Alexander-type theorem and they admit Reidemeister-type moves $GR_{1}$, $GR_{2}$, $GR_{3}$ and $ GR_{4}$.

\begin{rem}\label{refinament}
   Note that, as a refinement of the Alexander-type theorem, we have that any diagram $D$ of a generalized link $L$, with $S(D)$ Seifert circles, can be deformed as the closure of a generalized braid with $S(D)$ strings.
\end{rem}

\begin{lem}\label{lem2}
  Let $L$ be a generalized link and $D$ its diagram. If $ind(D)$ is compatible with generalized crossings that admit Reidemeister-type moves $GR_{1}$, $GR_{2}$, $GR_{3}$ and $ GR_{4}$, then we have $$gb(L) \leq S(D) - ind(D),$$ where $gb(L)$ is the generalized braid index of $L$.  
\end{lem}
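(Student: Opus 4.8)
The plan is to prove the slightly stronger statement that whenever $\textrm{ind}(D)\geq k$ one has $gb(L)\leq S(D)-k$, and then specialize to $k=\textrm{ind}(D)$; phrasing it this way lets the induction on $k$ absorb the inequalities that appear when the index of an intermediate diagram is only bounded below. The base case $k=0$ is exactly Remark~\ref{refinament}: a diagram with $S(D)$ Seifert circles is the closure of a generalized braid on $S(D)$ strings, so $gb(L)\leq S(D)=S(D)-0$.

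For the inductive step, I would suppose $\textrm{ind}(D)\geq k\geq 1$ and fix an independent set $F=\{e_1,\dots,e_k\}$ of edges of $\Gamma(D)$. By condition (1) in the definition of independence each $e_j$ is a singular edge, so the two Seifert circles it joins are connected by a single crossing; by condition (3) there is an edge $e_i\in F$ and an endpoint $v$ of $e_i$ such that $F\setminus\{e_i\}$ is an independent set of $k-1$ edges in the quotient graph $\Gamma(D)/\mbox{star}(v)$. I would then produce, by a reducing move, a diagram $D'$ of the same generalized link $L$ with one fewer Seifert circle and whose Seifert graph is $\Gamma(D)/\mbox{star}(v)$; granting this, $\textrm{ind}(D')\geq k-1$ and $S(D')=S(D)-1$, so the induction hypothesis gives $gb(L)\leq S(D')-(k-1)=S(D)-k$, completing the step.

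The heart of the argument is the reducing move itself. Since $e_i$ is singular, the Seifert circle $C_v$ meets the circle at the other endpoint of $e_i$ in exactly one crossing; the idea, following Murasugi--Przytycki and its use by Ohyama and Takeda, is to slide $C_v$ coherently across that crossing so as to merge it with its neighbours, eliminating one circle and realizing the graph contraction $\Gamma(D)\mapsto\Gamma(D)/\mbox{star}(v)$. I would check that each elementary deformation in this slide is one of the generalized Reidemeister-type moves $GR_1,GR_2,GR_3,GR_4$, so that $D'$ still represents $L$, and that the number of Seifert circles drops by exactly one while no new crossings are created between circles that were previously nonadjacent, which is what makes the resulting graph equal to $\Gamma(D)/\mbox{star}(v)$.

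The main obstacle I expect is precisely establishing this reducing move in the generalized setting: in the classical case the slide is performed with ordinary Reidemeister moves, but here the strands carrying $C_v$ may pass through virtual, flat, singular, or mixed crossings, and one must verify that the slide can still be carried out using only $GR_1$--$GR_4$. This is exactly where the hypothesis that $\textrm{ind}(D)$ is \emph{compatible with generalized crossings admitting $GR_1$--$GR_4$} should be used: it guarantees that the crossings encountered along $C_v$ are of a type for which the merging deformation is available, so that the move is a genuine equivalence of generalized links and the Seifert-graph bookkeeping $\Gamma(D')=\Gamma(D)/\mbox{star}(v)$ holds. Verifying this case by case over the crossing types in Table~\ref{table} is the routine-but-delicate part on which the rest of the proof rests.
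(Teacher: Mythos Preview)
Your overall strategy---reduce $S(D)$ one at a time via a Murasugi--Przytycki/Ohyama--Takeda slide and run an induction---is exactly what the paper does. Two points, however, need correction.

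First, the claim that after the slide one has $\Gamma(D')=\Gamma(D)/\mbox{star}(v)$, with ``no new crossings created between circles that were previously nonadjacent'', is false. The slide is carried out by many $GR_{2}$ moves (followed by $GR_{3}$/$GR_{4}$ moves and a final $GR_{1}$), and these $GR_{2}$ moves \emph{do} introduce new crossings; moreover only the two circles $v$ and $v_{0}$ are amalgamated, so $\Gamma(D')$ has $|V(\Gamma(D))|-1$ vertices, whereas $\Gamma(D)/\mbox{star}(v)$ has $|V(\Gamma(D))|-\deg(v)$. The paper's correct description is that $\Gamma(D')$ is the one-point union of $\Gamma(D)/\mbox{star}(v)$ with an additional multiple-edge graph $K$ containing $\mbox{star}(v)-e$. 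Your induction is salvageable because $\textrm{ind}(K)=0$ and Theorem~\ref{thmind} then gives $\textrm{ind}(D')\geq k-1$, which is all you actually use; but the justification you wrote for that inequality is wrong.

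Second, the paper first decomposes $D=D_{1}\ast\cdots\ast D_{m}$ into special diagrams and performs the slide inside each $D_{i}$; this is what makes the ``long path $l$'' in the figures available and is also what lets Theorem~\ref{thmind} be invoked to split $\textrm{ind}(D)$ as $\sum_i\textrm{ind}(D_i)$. You skip this step entirely and defer the geometry of the slide to a ``routine-but-delicate'' verification, but without restricting to a special piece the slide need not be as pictured. This decomposition is not optional in the paper's argument and should be incorporated before the reducing move.
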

\begin{proof}
 Write $D = D_{1} \ast \ldots \ast D_{m}$ as a $\ast$-product of special diagrams $D_{i}$. Then $\Gamma(D) = \Gamma(D_{1}) \ast \ldots \ast \Gamma(D_{m})$. Since $\Gamma(D)$ is a planar bipartite graph, we have $\textrm{ind}(D) = \sum^{m}_{i=1} \textrm{ind}(D_{i})$ by Theorem~\ref{thmind}. First, we consider $\Gamma(D_{1})$ and if $\textrm{ind}(D_{1})=0$, we have nothing to do for $D_{1}$. Suppose $\textrm{ind}(D_{1})=k>0$. 
 Then there exists a singular virtual edge $e$ and a vertex $v$, one of the end points of $e$, such that $\textrm{ind}(\Gamma(D_{1})/\textrm{star}(v)) = k-1$. We denote the other end points of $e$ by $v_{0}$. In the following, we identify a vertex and its corresponding Seifert circle when there is no confusion. The edge $e$ corresponds to a generalized crossing $c$ of $D_{1}$, which consists of two short paths. We deform one of the short paths, say $u$, of $c$ along a long path $l$ by generalized Reidemeister-type moves, where $l$ is a path as depicted by the dotted curve in Figure~\ref{Figura 1 da demonstração}.

\begin{figure}[!htb]
\centering
\includegraphics[scale=1.0]{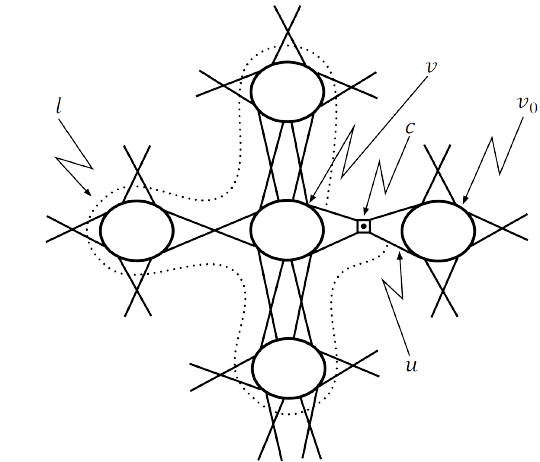}
\caption{A long path l, the Seifert circles, and the crossing near v.}
\label{Figura 1 da demonstração}
\end{figure}

We first deform $u$ by using $GR_{2}$ moves along $l$, but ``outside of $l$", as depicted in Figure~\ref{Figura 2 da demonstração}.

\begin{figure}[!htb]
\centering
\includegraphics[scale=1.0]{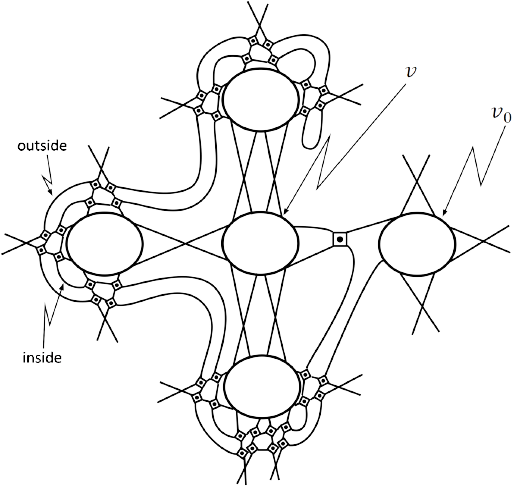}
\caption{A deformation of u by many $GR_{2}$ moves along l.}
\label{Figura 2 da demonstração}
\end{figure}

Then, we deform $u$ by using $GR_{2}, GR_{3}$ and $GR_{4}$ moves along $l$ as in Figure~\ref{Figura 3 da demonstração}, where we deform only the ``inside part".

\begin{figure}[!htb]
\centering
\includegraphics[scale=1.0]{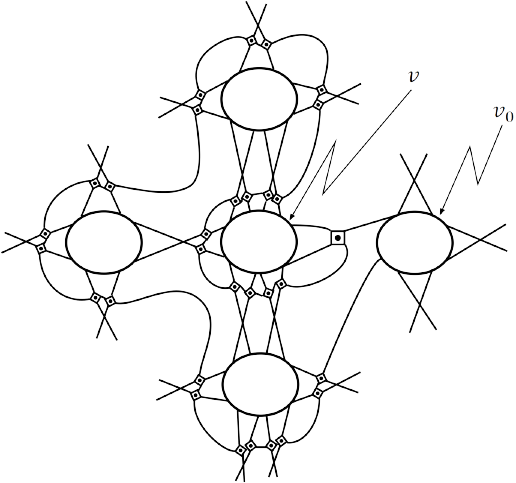}
\caption{A deformation of the ``inside part" by $GR_{2}, GR_{3}$ and $GR_{4}$ moves along l.}
\label{Figura 3 da demonstração}
\end{figure}

If $u$ meets the same type of crossing, then we use a $GR_{3}$ move, and if $u$ meets another type of crossing, then we use a $GR_{4}$ move. 
Next, we deform $u$ using $GR_{2}, GR_{3}$ and $GR_{4}$ moves as depicted in Figure~\ref{Figura 4 da demonstração}. 
Moreover, we deform $u$ by using a $GR_{2}$ move as depicted in Figure~\ref{Figura 5 da demonstração}. 

\begin{figure}[!htb]
\centering
\includegraphics[scale=1.0]{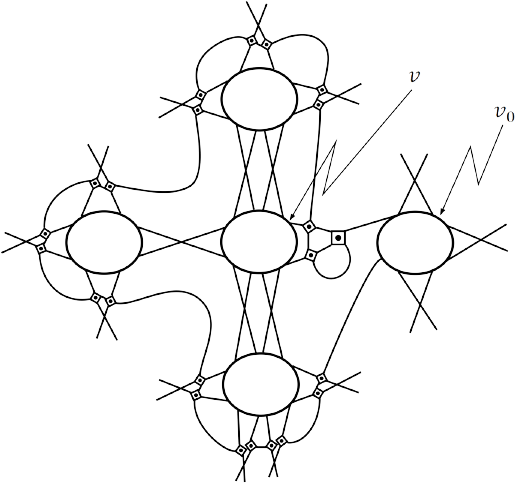}
\caption{A further deformation of the ``inside part" by $GR_{2}, GR_{3}$ and $GR_{4}$ moves.}
\label{Figura 4 da demonstração}
\end{figure}

\begin{figure}[!htb]
\centering
\includegraphics[scale=1.0]{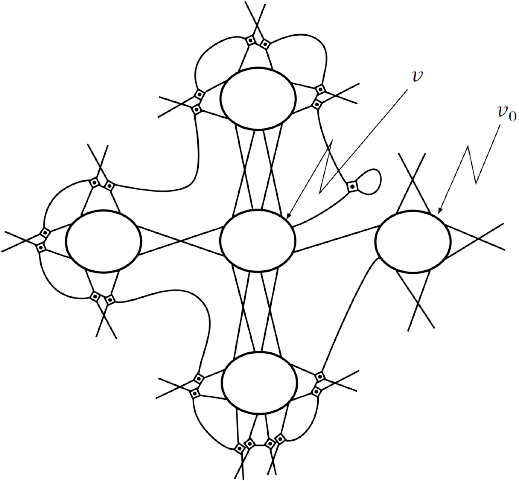}
\caption{Eliminating c by a $GR_{2}$ move.}
\label{Figura 5 da demonstração}
\end{figure}

Finally, we deform $u$ by using a $GR_{1}$ move as depicted in Figure~\ref{Figura 6 da demonstração}. 
In this way, we get a new diagram $D'_{1}$. Since the two Seifert circles represented by $v$ and $v_{0}$ are amalgamated to one circle $v_{1}$, we have $S(D'_{1}) = S(D_{1}) - 1$.
Now we see that $\Gamma(D'_{1})$ is the one point union of $\Gamma(D_{1})/\textrm{star}(v)$ and some multiple edge graph $K$, where $K$ contains $\textrm{star}(v)-e$ as a subgraph and $\textrm{ind}(\Gamma(D'_{1})) = k-1$.

\begin{figure}[!htb]
\centering
 \includegraphics[scale=1.0]{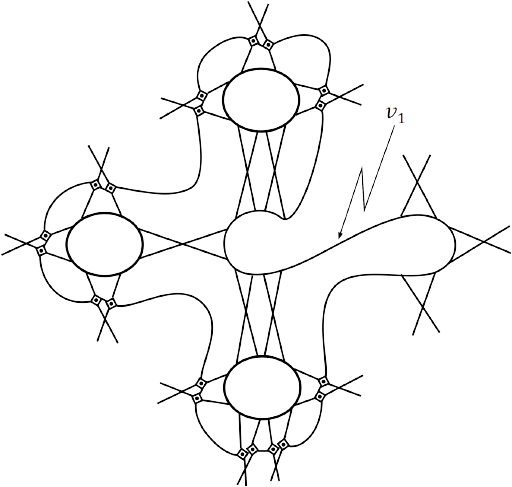}
\caption{Diagram having a new Seifert circle.}
\label{Figura 6 da demonstração}
\end{figure}

We can repeat the same argument $k$ times so that finally $\Gamma(D_{1})$ is reduced to the block sum of $\Gamma(D^{k}_{1})$ and $k$ multiple edges graphs $K, K^{1}, \ldots, K^{k-1}$, where $\textrm{ind}(\Gamma(D^{k}_{1})) = 0$.
Apply the same argument to each $\Gamma(D_{i})$ and eventually $\Gamma(D)$ is reduced to the block sum of $\Gamma(D^{k_{i}}_{i}), i=1,2,\ldots, m$, where $\textrm{ind}(\Gamma(D^{k_{i}}_{i})) = 0$ and the multiple edge graphs $K_{i}, K_{i}^{1}, \ldots, K_{i}^{k_{i}-1}$, $i=1,2,\ldots, m$. The final generalized link diagram $\widehat{D}$ corresponding to this graph has $S(\widehat{D}) = S(D) - \sum^{m}_{i=1} \textrm{ind}(D_{i})$. By Remark~\ref{refinament}, we have $gb(L) \leq S(\widehat{D})$. This complete the proof. 
      
\end{proof}

 Let $D$ be a diagram of a generalized link $L$ with no nugatory crossings, we define the {\it total crossing number} of $L$ as being the total crossing number of $D$. 
 Now, we state the main result of this paper. 
 
\begin{thm}\label{thm2}
   Let $D$ be a diagram of a generalized link $L$ with no nugatory crossings. If $\textrm{ind}(D)$ is compatible with generalized crossings which admit Reidemeister-type moves $GR_{1}$, $GR_{2}$, $GR_{3}$ and $ GR_{4}$, then we have $$ tc(L) \geq 2(gb(L) - 1), $$ where $tc(L)$ is the total crossing number of $L$ and $gb(L)$ is the generalized braid index of $L$.
\end{thm}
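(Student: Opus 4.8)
The plan is to derive the inequality directly by chaining the two principal results already established, Theorem~\ref{thm1} and Lemma~\ref{lem2}, and then to pass from the diagram $D$ to the link $L$ by invoking the definition of the total crossing number recorded just before the statement. Since all the geometric and combinatorial machinery has been built up in the preceding lemmas, the present argument should be a short assembly of inequalities rather than a fresh construction.

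First I would apply Theorem~\ref{thm1}. Because $D$ is a diagram of $L$ with no nugatory crossings, the hypothesis of that theorem is met, and it yields
$$tc(D) \geq 2(S(D) - \textrm{ind}(D) - 1).$$
Next I would bring in Lemma~\ref{lem2}. By assumption $\textrm{ind}(D)$ is compatible with generalized crossings admitting the moves $GR_{1}, GR_{2}, GR_{3}, GR_{4}$, which is exactly the hypothesis needed, so the lemma gives
$$gb(L) \leq S(D) - \textrm{ind}(D).$$
Subtracting $1$ from both sides and multiplying by $2$ (a monotone operation) turns this into
$$2(S(D) - \textrm{ind}(D) - 1) \geq 2(gb(L) - 1).$$

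Combining the first and third displays produces $tc(D) \geq 2(gb(L) - 1)$. To finish, I would use that $D$ has no nugatory crossings: by the definition of the total crossing number of $L$ stated just above the theorem, $tc(L) = tc(D)$, and substituting this gives the desired inequality $tc(L) \geq 2(gb(L) - 1)$.

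I do not expect a genuine obstacle at this final stage, precisely because the substantive content has been absorbed into Lemma~\ref{lem2}: the real difficulty lies in the geometric deformation argument there, where one repeatedly amalgamates Seifert circles using $GR_{2}$, $GR_{3}$ and $GR_{4}$ moves to reduce the index to zero and bound $gb(L)$ by $S(\widehat{D})$. The only point that deserves care in the present proof is verifying that the two hypotheses hold simultaneously — absence of nugatory crossings, which feeds Theorem~\ref{thm1} and legitimizes the identification $tc(L)=tc(D)$, and compatibility of $\textrm{ind}(D)$ with the $GR$-moves, which feeds Lemma~\ref{lem2} — and both are supplied verbatim by the assumptions of the statement.
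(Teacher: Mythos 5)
Your proposal is correct and follows exactly the paper's own proof: the paper likewise derives the result as an immediate consequence of Theorem~\ref{thm1} and Lemma~\ref{lem2} together with the identification $tc(D)=tc(L)$. Your version simply writes out the chaining of the two inequalities in more detail.
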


\begin{proof}
This is an immediate consequence of Theorem~\ref{thm1} and Lemma~\ref{lem2}, since $tc(D)=tc(L)$.

\end{proof}

We note that Theorem~\ref{thm2} gives a generalized version of the same kind of inequality established for the classical case in \cite[Theorem~3.8]{article1} and for the virtual case in \cite[Theorem~3.1]{article3}. 
For classical links, it is not necessary restrictions for the index $\textrm{ind}(D)$ since we consider only classical crossings, where we have $GR_{1},$ $GR_{2}$ and $GR_{3}$ type-moves (i.e. the classical Reidemeister moves). For virtual links, in \cite{article3} the author considered $\textrm{ind}(D) = \textrm{ind}_{0}(D)$ since only the crossings which admits $GR_{1},$ $GR_{2},$ $GR_{3}$ and $GR_{4}$ type-moves are the virtual ones, in this case they are the $V_{1}$, $V_{2}$, $V_{3}$ and $V_{4}$ moves. 
In a similar way, we have the following proposition.

\begin{prop}\label{cor:vsl}
  Let $D$ be a diagram of a virtual singular link $L$ with no nugatory crossings. If $ind(D) = ind_{0}(D)$, then we have $$ tc(L) \geq 2(vsb(L) - 1), $$ where $tc(L)$ is the total crossing number of $L$ and $vsb(L)$ is the virtual singular braid index of $L$. 
\end{prop}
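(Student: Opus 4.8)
The plan is to derive Proposition~\ref{cor:vsl} as a direct specialization of Theorem~\ref{thm2} to the case of virtual singular links. First I would observe that a virtual singular link is precisely a generalized link whose diagrams carry real, virtual, and singular crossings, and which admit the Reidemeister-type moves listed in Table~\ref{table} for the virtual singular row, namely $R_{1}, R_{2}, R_{3}, V_{1}, V_{2}, V_{3}, V_{4}, S_{1}, S_{2}, S_{3}$. In the language of generalized Reidemeister-type moves of Figure~\ref{GRM}, these translate into $GR_{1}, GR_{2}, GR_{3}$ and $GR_{4}$ moves applied to the appropriate crossing types. In particular, by \cite{article2} a virtual singular link admits an Alexander-type theorem, so the generalized braid index $gb(L)$ is well-defined and equals the virtual singular braid index $vsb(L)$.

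The key step is to identify which crossings of a virtual singular diagram are compatible with the full set of moves $GR_{1}, GR_{2}, GR_{3}, GR_{4}$ required by the hypothesis of Theorem~\ref{thm2}. Here I would argue, as the authors indicate for the virtual case following \cite{article3}, that among real, virtual, and singular crossings only the virtual crossings satisfy all four move types simultaneously: virtual crossings admit the detour-type moves $V_{1}, V_{2}, V_{3}$ as well as the mixed move $V_{4}$, so they behave exactly like the virtual edges (sign $0$) in the Seifert graph. Real and singular crossings do not enjoy the unrestricted mixed-move freedom needed for the deformation argument in Lemma~\ref{lem2}. Consequently the only independent edges we may use in the construction of Lemma~\ref{lem2} are the virtual (sign $0$) singular edges, so the relevant index is $\mathrm{ind}_{0}(D)$ rather than the full $\mathrm{ind}(D)$. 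This justifies the hypothesis $\mathrm{ind}(D) = \mathrm{ind}_{0}(D)$ in the statement.

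With this identification in place, I would simply invoke Theorem~\ref{thm2}: its hypothesis demands that $\mathrm{ind}(D)$ be compatible with generalized crossings admitting $GR_{1}, GR_{2}, GR_{3}, GR_{4}$ moves, and the assumption $\mathrm{ind}(D)=\mathrm{ind}_{0}(D)$ guarantees precisely this compatibility by restricting attention to the virtual crossings. Since $D$ has no nugatory crossings, the total crossing number of $L$ equals $tc(D)$, and Theorem~\ref{thm2} yields
$$
tc(L) \geq 2\bigl(gb(L) - 1\bigr).
$$
Finally, replacing $gb(L)$ by its concrete instantiation $vsb(L)$ for virtual singular links gives the claimed inequality $tc(L) \geq 2(vsb(L)-1)$.

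The main obstacle I anticipate is justifying cleanly that $\mathrm{ind}_{0}(D)$ is the correct index, i.e.\ that the deformation moves of Lemma~\ref{lem2} genuinely go through using only virtual edges as the independent set. One must verify that every step of the long-path deformation in Figures~\ref{Figura 1 da demonstração}--\ref{Figura 6 da demonstração} is realizable when the crossing $c$ associated to the independent edge $e$ is a virtual crossing, using $V_{4}$-type (that is, $GR_{4}$) moves to slide past real and singular crossings encountered along $l$. The remaining bookkeeping—equality of $gb(L)$ and $vsb(L)$, and $tc(D)=tc(L)$ under the no-nugatory-crossings hypothesis—is routine, so the essential content is this compatibility verification, which reduces the general theorem to the virtual singular setting.
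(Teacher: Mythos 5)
Your proposal is correct and follows essentially the same route as the paper: specialize the argument of Theorem~\ref{thm2} (via Theorem~\ref{thm1} and Lemma~\ref{lem2}) to virtual singular links, using the hypothesis $\mathrm{ind}(D)=\mathrm{ind}_{0}(D)$ so that the independent edges correspond to virtual crossings, which realize $GR_{1},GR_{2},GR_{3}$ as $V_{1},V_{2},V_{3}$. The paper is slightly more explicit on two small points you gloss over: the $GR_{4}$-type moves needed are both $V_{4}$ (past real crossings) and $S_{3}$ (past singular crossings), and in Lemma~\ref{lem2} one must replace the block-additivity of $\mathrm{ind}$ (Theorem~\ref{thmind}) by that of $\mathrm{ind}_{0}$ (Corollary~\ref{cor:ind}).
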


\begin{proof}

Analogously to the proof of Theorem~\ref{thm2}, it follows from Theorem~\ref{thm1} and Lemma~\ref{lem2}, where we use Corollary~\ref{cor:ind} in place of Theorem~\ref{thmind}, since $\textrm{ind}(D) = \textrm{ind}_{0}(D)$. The Reidemeister-type moves $GR_{1},$ $GR_{2},$ $GR_{3}$ used are $V_{1},$ $V_{2},$ $V_{3},$ respectively. Moreover $V_{4}$ and $S_{3}$ are the $GR_{4}$ type-moves required. 
\end{proof}

\begin{rem}

Similarly to the claim of Remark~\ref{rem:vs}, we may extend Proposition~\ref{cor:vsl} to some other generalized links.

%As we mentioned in Remark~\ref{rem:vs} there are some forbidden moves for virtual singular links (\cite[Figure~4]{article2}). 
%Therefore, if we allow one, or more, forbidden moves in the collection of virtual singular link diagrams we obtain other generalized links for which Theorem~\ref{sggnl} holds.
%For instance, the closure of generalized braids obtained in some quotients of virtual singular braid groups, see Definition~20 and Figure~7 of \cite{O}.
\end{rem}

%%%%%%%%%%%%%%%%%%%%%%%%%%%%%%%%%%  Section  %%%%%%%%%%%%%%%%%%%%%%%%%%%%%%%%%%%%%%%%%%%

\end{document}